\DeclareFontFamily{U}{stix2bb}{}
\DeclareFontShape{U}{stix2bb}{m}{n}{<->stix2-mathbb}{}
\numberwithin{equation}{section} 
\theoremstyle{plain}
\newtheorem{theorem}{Theorem}[section]
\newtheorem{prop}[theorem]{Proposition}
\newtheorem{lemma}[theorem]{Lemma}
\theoremstyle{remark}
\newtheorem{remark}[theorem]{Remark}
\def\cxi{\mathbf{i}}
\title{The quantum integrable hierarchy for the Gromov-Witten theory of~ elliptic~curves}
\author{Paolo Rossi}
\address{P. R.: Dipartimento di Matematica ``Tullio Levi-Civita'', Università degli studi di Padova,
Via Trieste 63, 35121 Padova, Italia}
\email{paolo.rossi@math.unipd.it}
\author{Sergey Shadrin}
\address{S. S.: Korteweg-de Vries Instituut voor Wiskunde, Universiteit van Amsterdam, Postbus 94248, 1090GE Amsterdam, The Netherlands}
\email{s.shadrin@uva.nl}
\author{Ishan Jaztar Singh}
\address{I. J. S.: Dipartimento di Matematica ``Tullio Levi-Civita'', Università degli studi di Padova, Via Trieste 63, 35121 Padova, Italia}﻿
\email{i.s.ishansingh16@gmail.com}
\begin{document}
﻿
\begin{abstract}
We construct the quantum double ramification hierarchy associated with the Gromov-Witten theory of elliptic curves. We use results of Oberdieck and Pixton on the intersection numbers of the double ramification cycle, the Gromov-Witten classes of the elliptic curve and the Hodge class $\lambda_{g-1}$ together with vanishing results for $\lambda_{g-2}$ to produce a closed, modular expression for the resulting integrable hierarchy. It is the first explicit nontrivial example of a quantum integrable hierarchy from a cohomological field theory containing fermionic fields, which correspond to the odd classes in the cohomology of the elliptic curve.
\end{abstract}
﻿
\maketitle
﻿
\tableofcontents
﻿
\section*{Introduction}

Gromov-Witten theories of target varieties with non trivial odd cohomology require a superalgebraic version of the notion of cohomological field theory \cite{kontsevich_manin_1994} and lead to correspondingly supergeometric constructions of the associated integrable hierarchies, giving rise to integrable systems with both bosonic and fermionic dynamical fields. In the literature the super versions of these construction are often hinted at, but rarely developed in detail, in no small measure due the lack of computed examples (see, however,~\cite{DSVV}).\\

The elliptic curve provides the simplest example of projective variety with nontrivial odd cohomology. The Gromov-Witten theory of curves was solved in \cite{okounkov_pandharipande_06_a,okounkov_pandharipande_06_b, okounkov_pandharipande_06_c}, but for the elliptic curve the corresponding integrable hierarchy is somewhat trivial as, up to coordinate change, it reduces to a purely dispersionless system of evolutionary PDEs  \cite{buryak_22}. Incidentally, all curves of positive genus, as any target variety with non positive first Chern class, have a quite simple associated classical hierarchy \cite{buryak_dubrovin_guéré_rossi_18}.\\

The double ramification hierarchy of \cite{buryak_15}, one of the two extant constructions (the other one being the Dubrovin-Zhang hierarchy of \cite{dubrovin_zhang_01,buryak_posthuma_shadrin_12}) of an integrable hierarchy of evolutionary PDEs from a given cohomological field theory, employs the intersection theory of the latter with the double ramification cycle, the top Hodge class and the psi classes. In \cite{buryak_rossi_15} it was shown to admit a quantization as a Hamiltonian system, a deformation involving intersection numbers with the rest of the Hodge Chern polynomial. This quantization is a priori nontrivial for the elliptic curve.\\

In this paper we compute the quantum DR hierarchy of (the Gromov-Witten theory of the) elliptic curve. Dimension counting leaves room for exactly two terms in the $\hbar$-deformation of the primary Hamiltonians of the system: for the term linear in $\hbar$ the relavant intersection numbers were computed in \cite{pixton_oberdieck_23}, while we show that the quadratic term in $\hbar$ has to vanish a priori (by a direct geometric argument: the pushforward along a branched covering map $C_1 \to C_2$ between Riemann surfaces sends principal divisors to principal divisors, so, if $C_1$ has positive genus, you cannot completely constrain the image in $C_2$ of the support of such divisor because it will generically violate the Jacobian conditions).\\

Thanks to the computations of \cite{pixton_oberdieck_23} the resulting quantum DR hierarchy exhibits a quasi-modular nature (in the degree parameter). In particular the closed expression for the primary Hamiltonians encodes the dynamics of two bosonic and two fermionic fields interacting via the integral transform of a quadratic interaction term involving shift operators, with kernel depending modularily on one of the bosonic fields themselves. This is different from what happens in more traditional ellptic quantum integrable systems, where the modular parameter is fixed, not a dynamical variable.\\

Beside the ordinary classical limit of \cite{buryak_22}, one can perform a simple cusp limit in the modular parameter, obtaining a trigonometric kernel instead, or a more sophisticated double scaling limit to produces a classical dispersive nonlinear system. Finally, the dispersionless limit of the full quantum system has a primary Hamiltonian whose modular dependence is exclusively via the $\tau$-derivative of the Eisenstein series of quasimodular degree $2$.\\

Several questions remain about this integrable Hamiltonians, the main one being about their nontriviality with respect to changes of coordinates: is there a quantum analogue of the Miura triviality observed for the classical limit? If not, can we identify a known integrable model corresponding to either our full DR hierarchy or one of its simpler limits?\\
 
\subsection*{Conventions and Notations}
\begin{itemize}
    \item The Einstein summation convention is applied for repeated upper and lower Greek indices.
    \item The symbol $*$ is used to represent any value within the appropriate range of a subscript or superscript.
    \item For a given topological space $X$, denote by $H_*(X)$ and $H^*(X)$ the homology and cohomology groups of $X$ with coefficients in $\mathbb{C}$.
    \item The moduli space of stable curves is denoted by $\overline{M}_{g,n}$.
    \item The ring of integers modulo $k$ is denoted by $\mathbb{Z}_k$.
    \item The symbol \( i \) is used in two different contexts: in generating sums of quantum hierarchies, it denotes the complex number \(\sqrt{-1}\); in other contexts, such as subscripts and combinatorial sums, it typically represents a positive integer.
\end{itemize}
﻿
\subsection{Acknowledgements}
P. R. is supported by the University of Padova and
is affiliated to the INFN under the national project MMNLP and to the INdAM group GNSAGA. S. S. is supported by the Netherlands Organization for Scientific Research. I. J. S. is supported by the University of Padova and the Marie Curie Fellowship (project ID 741896) and is affiliated with the INFN under the national project MMNLP. Additionally, we extend our gratitude to the Korteweg-de Vries Institute for Mathematics at the University of Amsterdam for hosting I. J. S. during the completion of of this work.
 
\newpage 
﻿
﻿
\section{(Super) quantum double ramification hierarchy}
﻿
The purpose of this section is to recall the definition of the quantum double ramification hierarchy~\cite{buryak_rossi_15,buryak_dubrovin_guéré_rossi_20}, pointing out the adjustments needed for the $\mathbb{Z}_2$-graded case.
﻿
﻿
\subsection{Cohomological field theories}
\label{section: CohFT}
Let $V$ be a finite dimensional $\mathbb{C}$-vector space spanned by generators $\{e_1,\ldots,e_N\}$ where $e_1$ is a distinguished unit element. Moreover, let $V$ be $\mathbb{Z}_2$-graded, that is, $V=V_0\oplus V_1$, where elements of $V_0$ are called \textit{even} vectors, while elements of $V_1$ are called \textit{odd} vectors. We assume that $e_1\in V_0$. It is convenient to use the map $\deg_{\mathbb{Z}_2}\colon V \to \mathbb{Z}_2$ that maps the even vectors to $0$ and the odd vector to $1$. Equip $V$ with a non-degenerate bilinear form $\eta:V\times V \rightarrow \mathbb{C}$, which is even and graded symmetric. The latter requirements mean that
\begin{itemize}
    \item $\eta$ restricted to $V_0\otimes V_0$ is symmetric;
    \item $\eta$ restricted to $V_1\otimes V_1$ is skew-symmetric;
    \item $\eta$ restricted to $V_0\otimes V_1$ and $V_1\otimes V_0$ vanishes. 
\end{itemize}
A \textit{cohomological field theory (CohFT)} is a family of maps $\{c_{g,n}\}_{2g-2+n>0}$, where
\begin{equation} \label{eq:definition-CohFT-cgn}
c_{g,n}\colon V^{\otimes n}\rightarrow H^{*}(\overline{M}_{g,n},\mathbb{C}), \qquad 
v_1\otimes \dots \otimes v_n \mapsto c_{g,n}(v_1\otimes \dots \otimes v_n).
\end{equation}
We can also identify $c_{g,n}$ as elements of $H^{*}(\overline{M}_{g,n},\mathbb{C})\otimes  (V^*)^{\otimes n}$, and we refer to them as `classes'. The classes $c_{g,n}$ are required to satisfy the following properties: 
﻿
\begin{enumerate}
\item[i.] The classes $c_{g,n}$ are even, i.e., the maps~\eqref{eq:definition-CohFT-cgn} preserve the $\mathbb{Z}_2$-grading.  
\item[ii.] The classes $c_{g,n}$ are graded equivariant under the action of the symmetry group $S_n$ by permutations on the factors of $V^{\otimes n}$ and by relabeling of the marked points on $\overline{M}_{g,n}$. 
This means that for a given permutation $s\in S_n$ we denote $\sigma_s\colon \overline{M}_{g,n}\to \overline{M}_{g,n}$ to be the isomorphism induced by the relabeling of the marked points according to $s$, and then
\begin{equation*}
\pm c_{g,n}(v_{s(1)}\otimes\ldots\otimes v_{s(n)})) = (\sigma_s^{-1})^*c_{g,n}(v_1\otimes\ldots\otimes v_n),
\end{equation*} 
where the sign $\pm$ is the Koszul sign corresponding to the reordering of the homogeneous vectors $v_i$ by the permutation $s$. 
﻿
\item[iii.] Consider the gluing map of the first type,
\begin{equation*}
\mathsf{gl_1}\colon \overline{M}_{g_1,n_1+1}\times\overline{M}_{g_2,n_2+1}\rightarrow \overline{M}_{g,n},
\end{equation*}
where $g=g_1+g_2$ and $n=n_1+n_2$, and we assume that the points with the labels $i\in S_1$ (resp., $i\in S_2$) lie on the first (resp., second) component of the degenerated curve, so $S_1\sqcup S_2 = \{1,\dots,n\}$ and $|S_1|=n_1$ and $|S_2|=n_2$. Then its pullback on $c_{g,n}$ is given by
\begin{align*}
& \mathsf{gl_1}^*(c_{g,n}(v_1,\dots,v_n))= 
\pm c_{g_1,n_1+1}\big( e_\alpha \otimes \bigotimes_{i\in S_1} v_i \big)\eta^{\alpha\beta}c_{g_2,n_2+1}\big(e_\beta\otimes \bigotimes_{i\in S_2} v_i\big),
\end{align*}
where the sign $\pm$ is the Koszul sign. 
\item[iv.] For the gluing map of the second type,
\begin{equation*}
\mathsf{gl_2}\colon \overline{M}_{g-1,n+2}\rightarrow \overline{M}_{g,n},
\end{equation*}
its corresponding pullback on $c_{g,n}$ is expressed as
\begin{equation*}
\mathsf{gl_2}^*c_{g,n}(v_1\otimes \dots \otimes v_n))=c_{g-1,n+2}(v_1\otimes \dots\otimes v_{n}\otimes e_\alpha \otimes e_\beta)\eta^{\alpha\beta}.
\end{equation*}
Note that there is no Koszul sign in this case since $\eta$ is even. 
\item[v.] For the forgetful map that forgets the last marked point,
\begin{equation*}
\mathsf{fg}\colon \overline{M}_{g,n+1}\rightarrow \overline{M}_{g,n},
\end{equation*}
its corresponding pullback on $c_{g,n}$ is expressed as
\begin{equation*}
\mathrm{fg}^*(c_{g,n}(v_1\otimes \dots\otimes v_n))=c_{g,n+1}(v_1\otimes \dots\otimes v_n\otimes e_1)
\end{equation*}
Note that there is no Koszul sign in this case since $e_1$ is even. We also demand that 
\begin{equation*}
\eta(v_1,v_2)=c_{0,3}(v_1\otimes v_2\otimes e_1).
\end{equation*}
\end{enumerate}
﻿
\subsection{Double ramification cycle}
﻿
\label{sec: DR definition}
﻿
Let $A = (a_1, \ldots, a_n) \in \mathbb{Z}^n$ be a tuple such that $\sum_i a_i = 0$. Let $A_+$ denote a subtuple of $A$ consisting of all the positive integers in $A$, and let $A_-$ denote a subtuple of $A$ consisting of all the negative integers in $A$, and let $n_0$ be the number of $a_i$'s that are equal to $0$. Let $\overline{M}^\sim_{g,n_0}(\mathbb{P}^1, A_-, A_+)$ be the moduli space of stable relative maps of connected genus $g$ curves to the rubber, with ramification profiles $A_-$ and $A_+$ over the points $0, \infty \in \mathbb{P}^1$, respectively. Let
\begin{equation*}
	\mathsf{src} \colon \overline{M}^\sim_{g,n_0}(\mathbb{P}^1, A_-, A_+) \rightarrow \overline{M}_{g,n}
\end{equation*}
be the source map that forgets the stable relative map and retains the stabilization of the source curve. Moreover, the space $\overline{M}^\sim_{g,n_0}(\mathbb{P}^1, A_-, A_+)$ is endowed with the virtual fundamental class, whose Poincar\'e dual is denoted by
\begin{equation*}
	\left[\overline{M}^\sim_{g,n_0}(\mathbb{P}^1, A_-, A_+)\right]^{\mathrm{vir}} \in H^{2g}(\overline{M}^\sim_{g,n_0}(\mathbb{P}^1, A_-, A_+)).
\end{equation*}
Define the double ramification cycle $\mathrm{DR}_g(A)$ for a given tuple $A$ as:
\begin{equation*}
	\mathrm{DR}_g(A) \coloneqq \mathsf{src}_*\left[\overline{M}^\sim_{g,n_0}(\mathbb{P}^1, A_-, A_+)\right]^{\mathrm{vir}} \in H^{2g}(\overline{M}_{g,n}).
\end{equation*}
﻿
﻿
The double ramification cycle $\mathrm{DR}_g(A)$ can be described as a restriction to the hyperplane $\sum_i a_i=0$ an even polynomial of degree $2g$ in the parameters $a_i$ with the coefficients in the tautological classes in $H^{2g}(\overline{M}_{g,n})$. This description is not unique (as we can add an arbitrary polynomial that vanishes on the hyperplane $\sum_i a_i=0$); so we fix it in the way that doesn't depend a particular variable $a_j$, $j=1,\dots,n$
\begin{equation}
\label{eq:nonSymmetricPolyDR}
	\operatorname{DR}_{g}(A)=\sum_{\footnotesize\substack{K=(k_1,\dots,k_n) \\ k_j = 0 \\ k_i \in \mathbb{Z}_{\geq 0}, i\not=j \\ k_1+\cdots+k_n\leq 2g}} \mathfrak{D}^{(j)}_{g}(K)\prod_{i=1}^n a_i^{k_i}.
\end{equation}
The average of these expressions over $j=1,\dots,n$ gives us another expression for the double ramification cycle, denoted by
\begin{equation} \label{eq:SYmmetricPolynomialDR}
	\textrm{DR}_{g}(A)=\sum_{\footnotesize\substack{K=(k_1,\dots,k_n) \\ k_i \in \mathbb{Z}_{\geq 0} \\ k_1+\cdots+k_n\leq 2g}} \mathfrak{D}^{\mathrm{sym}}_{g}(K)\prod_{i=1}^n a_i^{k_i},
\end{equation}
which is a polynomial symmetric all variables $a_1,\dots,a_n$: 
﻿
﻿
\subsection{Quantum commutator and local functionals}
﻿
Let $V$ be the $N$-dimensional graded $\mathbb{C}$-vector space considered in Section \ref{section: CohFT}. We associate to it the ring of \emph{quantum differential polynomials} defined as the $\mathbb{Z}_2$-graded ring $\mathcal{A}=\mathbb{C}[\![ u^{\alpha} ]\!][u^{\alpha}_{j}][\![\varepsilon,\hbar]\!]$ equipped with an additional $\partial_x$-gradation $\deg_{\partial_x}\colon \mathcal{A} \to \mathbb{Z}$,  where
\begin{itemize}
    \item $u^{\alpha}_{j}$ are formal variables indexed by $\alpha = 1, \dots, N$ and $j \in \mathbb{Z}_{\geq 0}$ with the $\partial_x$-degree $\deg_{\partial_x}u^{\alpha}_{j}=j$;
    \item $\varepsilon$ is the dispersion parameter with $\partial_x$-degree $\deg_{\partial_x}(\varepsilon) = -1$;
    \item $\hbar$ is the quantization parameter with $\partial_x$-degree $\deg_{\partial_x}(\hbar) = -2$.
\end{itemize}
The $\mathbb{Z}_2$-grading $\mathcal{A} = \mathcal{A}_0 \oplus \mathcal{A}_1$ descents from the $\mathbb{Z}_2$-grading on $V$. That is, we assume that the basis $\{e_\alpha\}$ of $V$ is homogeneous with respect to the $\mathbb{Z}_2$-grading, and then defined $\deg_{\mathbb{Z}_2}\colon \mathcal{A}\to \mathbb{Z}_2$ as a ring morphism with the following initial values of $\deg_{\mathbb{Z}_2}$ on the generators: 
\begin{itemize}
	\item $\deg_{\mathbb{Z}_2}(u^\alpha_j) \coloneqq \deg_{\mathbb{Z}_2}(e_\alpha)$;
	\item $\deg_{\mathbb{Z}_2}(\epsilon) \coloneqq 0$;  
	\item $\deg_{\mathbb{Z}_2}(\hbar) \coloneqq 0$.
\end{itemize}
﻿
﻿
We have the following even differential operator on $\mathcal{A}$ of $\deg_{\partial_{x}}$-degree $+1$:
\begin{equation*}
    \partial_x = \sum_{i\geq 0} u^{\alpha}_{i+1}\frac{\partial}{\partial u^\alpha_i}.
\end{equation*}
Note that in particular $u^\alpha_j = \partial_x^j u^\alpha$.
﻿
﻿
﻿
﻿
We define the space of \emph{quantum local functionals} as the quotient space $\mathcal{A}/ (\mathrm{Im}(\partial_x) \oplus \mathbb{C}[\![\varepsilon,\hbar]\!])$, where $\mathbb{C}$ is the subspace of constants in $u^{\alpha}_j$ in $\mathcal{A}$. The projection operator from the ring of differential polynomials modulo the constants to the space of local functionals is given by:
\begin{align*}
 \mathcal{A}/\mathbb{C}[\![\varepsilon,\hbar]\!] & \rightarrow \mathcal{A}/ \left(\textrm{Im}(\partial_x)\oplus \mathbb{C}[\![\varepsilon,\hbar]\!]\right); \\
f & \mapsto \overline{f}\coloneqq \int f\, dx.
\end{align*}
﻿
We often use another set of formal variables $p^{\alpha}_{k}$ that are related to the $u$-variables via a Fourier series expansion,
\begin{equation}
\label{eqn: fourier series expansion}
    u^\alpha = \sum_{k \in \mathbb{Z}} p^\alpha_k e^{\cxi k x}.
\end{equation}
This allows us to consider elements of the ring $\mathcal{A}$ inside the auxiliary ring $\mathbb{C}[\![ p^{\alpha}_{k>0} ]\!][p^{\alpha}_{k\leq 0}][\![\varepsilon,\hbar]\!][e^{\cxi x}, e^{-\cxi x}]$.
Here $p^{\alpha}_{k}$ are formal variables indexed by $\alpha = 1, \ldots, N$ and $k \in \mathbb{Z}$, and $\deg_{\mathbb{Z}_2}  p^{\alpha}_{k} \coloneqq \deg_{\mathbb{Z}_2} u^\alpha$.  This auxiliary ring is used for construction of the $\star$ product and the quantum commutator as follows.
﻿
For any $f,g\in \mathcal{A}$ homogeneous with respect to the $\mathbb{Z}_2$-grading, the \textit{quantum commutator} is given in terms of the star product defined on $\mathcal{A}$:
\begin{equation}
\label{eqn: quantum commutator}
    [f,g] \coloneqq f\star g - \pm g\star f
\end{equation}
where the sign $\pm$ is the Koszul sign (which is equal in this case to $(-1)^{\deg_{\mathbb{Z}_2} (f) \cdot \deg_{\mathbb{Z}_2} (g)}$) and the $\star$ product is defined as
\begin{align*}
f\star g &=f \  \mathrm{exp}\left(\sum _{k\geq 0} \cxi\hbar k\eta ^{\alpha \beta }{\frac{\overleftarrow\partial }{\partial p_{k}^{\alpha }}}{\frac{\overrightarrow\partial }{\partial p_{-k}^{\beta }}}\right)  g \\
& =\sum _{ \footnotesize \substack{n\geq 0 \\ k_{1} ,k_{2} ,\ldots ,k_{n} \geq 0}}\pm \frac{( \cxi\hbar ){^{n}}}{n!} k_{1} \eta ^{\alpha _{1} \beta _{1}} \ldots k_{n} \eta ^{\alpha _{n} \beta _{n}}\frac{\partial ^{n} f}{\partial p_{k_{1}}^{\alpha _{1}} \ldots \partial p_{k_{n}}^{\alpha _{n}}}\frac{\partial ^{n} g}{\partial p_{-k_{1}}^{\beta _{1}} \ldots \partial p_{-k_{n}}^{\beta _{n}}},
\end{align*}
where $\pm$ is the Koszul sign. 
As a consequence, in the auxiliary ring we obtain the standard commutation rules for creation and annihilation operators,
\begin{equation*}
    \frac{1}{\cxi\hbar}[p^{\alpha}_{k},p^{\beta}_j] = k\eta^{\alpha \beta}\delta_{k+j,0}.
\end{equation*}
The quantum commutator can be explicitly expressed in the $u$-variables, as shown in \cite{buryak_rossi_15}. It defines a structure of the $\mathbb{Z}_2$-graded Lie algebra, that is, it is $\mathbb{Z}_2$-graded skew-symmetric and satisfies the $\mathbb{Z}_2$-graded Jacobi identity. 
﻿
﻿
\subsection{Hamiltonian densities}
The Hamiltonians $\overline{G}_{\alpha,d}$ of the quantum double ramification hierarchy are defined in terms of their densities,
\begin{equation*}
\overline{G}_{\alpha,d} = \int G_{\alpha,d}(x) \, dx,
\end{equation*}
and the latter can be expressed in terms of the \(p\)- and $u$-variables, respectively, as
\begin{align*}
G_{\alpha ,d} &= \sum _{\footnotesize\substack{g,n\in \mathbb{Z}_{\geq 0} \\ 2g-1+n>0}} \frac{(\cxi\hbar)^{g}}{n!} \sum_{\footnotesize\substack{A=(a_1,\dots,a_n)\\ a_i \in \mathbb{Z}
}} \int_{\overline{M}_{g,n+1}} \mathrm{DR}_{g}\left( -\textstyle\sum_{i=1}^n a_{i}, A\right)\, \Lambda( \tfrac{-\varepsilon^2}{\cxi\hbar}) \, \psi_{1}^{d}\, c_{g,n+1}\big(e_{\alpha} \otimes \bigotimes_{i=1}^n e_{\alpha_i} \big)  \prod_{i=1}^n p^{\alpha_i}_{a_i}e^{\cxi a_i x}  \\
&= \sum _{\footnotesize\substack{g,n\in \mathbb{Z}_{\geq 0} \\ 2g-1+n>0}} \frac{(\cxi\hbar)^{g}}{n!} \sum_{\footnotesize\substack{K=(0,k_1,\dots,k_n) \\ k_i \in \mathbb{Z}_{\geq 0} \\ k_1+\cdots+k_n\leq 2g}} 
\int_{\overline{M}_{g,n+1}}\mathfrak{D}^{(1)}_{g}(K) \, \Lambda( \tfrac{-\varepsilon^2}{\cxi\hbar}) \, \psi_{1}^{d}\, c_{g,n+1}\big(e_{\alpha} \otimes \bigotimes_{i=1}^n e_{\alpha_i} \big) \prod_{i=1}^n u^{\alpha_i}_{k_i} .
\end{align*}
Here $ \Lambda(\tfrac{-\varepsilon^2}{\cxi\hbar}) = \sum_{j=0}^g (\tfrac{-\varepsilon^2}{\cxi\hbar})^j \lambda_j$, where $\lambda_j=c_j(\mathbb{E})\in H^{2j}(\overline{M}_{g,n+1})$ denote the $j$-th Chern class of the Hodge bundle $\mathbb{E}$ over $\overline{M}_{g,n+1}$, $j=1,\dots,g$. 
﻿
\par

It is useful to separately define $G_{\alpha,-1} \coloneqq \eta_{\alpha\mu}u^{\mu}$, which gives the Casimir elements of the quantum commutator, and the local functional
\begin{align}\label{eq:DR potential}
\overline{G} = \int \left[ \sum _{\footnotesize\substack{g,n\in \mathbb{Z}_{\geq 0} \\ 2g-2+n>0}} \frac{(\cxi\hbar)^{g}}{n!} \sum_{\footnotesize\substack{K=(k_1,\dots,k_n) \\ k_i \in \mathbb{Z}_{\geq 0} \\ k_1+\cdots+k_n\leq 2g}} 
\int_{\overline{M}_{g,n}} \mathfrak{D}^{\mathrm{sym}}_{g}(K) \, \Lambda( \tfrac{-\varepsilon^2}{\cxi\hbar}) \, c_{g,n}\big(\bigotimes_{i=1}^n e_{\alpha_i} \big)  \prod_{i=1}^n u^{\alpha_i}_{k_i} \, \right]dx,
\end{align}
which we refer to as the DR hierarchy potential. This last local functional plays a role towards the DR hierarchy analogous to the role of the Dubrovin-Frobenius potential towards the principal hierarchy of the corresponding Frobenius manifold. In particular, pushing fowards with respect to the first marked point and using the dilaton equation, we obtain immediately
\begin{equation}\label{eq: dilaton}
\overline{G}_{1,1} = \Big( \varepsilon \frac{\partial}{\partial \varepsilon} + 2\hbar \frac{\partial}{\partial \hbar} + \sum_{s=0}^\infty u^{\alpha}_s \frac{\partial}{\partial u^{\alpha}_s}-2\Big) \overline{G}.
\end{equation}

All together, the quantum Hamiltonians form a quantum integrable system:
\begin{prop}[\cite{buryak_rossi_15}]\label{proposition:commutativity}
	For all $\alpha,\beta=1,\ldots,N$ and $p,q\in\mathbb{Z}_{\geq -1}$ we have:
	\begin{equation*}
		[\overline{G}_{\alpha ,q} ,\overline{G}_{\beta ,q}] =0.
	\end{equation*}
\end{prop}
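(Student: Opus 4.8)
The plan is to reduce the statement to its ungraded counterpart, proved in \cite{buryak_rossi_15}, and then to verify that the $\mathbb{Z}_2$-grading introduces only Koszul signs that combine consistently. First I would expand the quantum commutator \eqref{eqn: quantum commutator} of $\overline{G}_{\alpha,p}$ and $\overline{G}_{\beta,q}$ directly in the $p$-variables. Since each Hamiltonian density is a series in the $p^\gamma_k$, the star product is a finite sum over the number of contractions, each contraction pairing a factor $p^\gamma_k$ of one Hamiltonian with a factor $p^\delta_{-k}$ of the other with weight $\cxi\hbar\, k\, \eta^{\gamma\delta}$. The factor $k$ kills the $k=0$ terms, so every surviving contraction carries a strictly positive momentum through the node it will be seen to create.

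Next I would give each contraction its geometric meaning. Removing a $p$-variable from $\overline{G}_{\alpha,p}$ and one from $\overline{G}_{\beta,q}$ and contracting them through $\eta^{\gamma\delta}$ corresponds, via the gluing axiom (iii), to joining the two marked points at a node and replacing the product $c_{g_1,\ast}\,\eta^{\gamma\delta}\,c_{g_2,\ast}$ by the $\mathsf{gl_1}$-pullback of a single class $c_{g_1+g_2,\ast}$; a second contraction between the same two Hamiltonians glues a further pair of points, creating a nonseparating node governed by axiom (iv) and the map $\mathsf{gl_2}$ and raising the genus by one. The two geometric inputs that make this work are the compatibility of the double ramification cycle with these boundary maps — the restriction of $\mathrm{DR}_g$ to a boundary divisor is expressed through DR cycles on the pieces, with the node momentum fixed by the balancing condition — and the behaviour of the Hodge class $\Lambda$: on a separating node $\mathbb{E}=\mathbb{E}_{g_1}\oplus\mathbb{E}_{g_2}$, so $\Lambda$ factorises, whereas across a nonseparating node $\mathbb{E}$ gains a trivial summand. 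This last fact is exactly what matches the powers of $\hbar$ produced by the contractions, together with the explicit prefactors $(\cxi\hbar)^g$, against the $\hbar$-grading built into $\Lambda(-\varepsilon^2/\cxi\hbar)$.

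With these reductions, each term of the commutator becomes an integral, over a product of moduli spaces, of a product of DR cycles, Hodge classes, psi powers and CohFT classes contracted at the node. I would then invoke the cancellation established in \cite{buryak_rossi_15}: the contribution of gluing a leg of $\overline{G}_{\alpha,p}$ to a leg of $\overline{G}_{\beta,q}$ is matched, after a reorganization of the sum over the node momentum $k$, by the corresponding contribution with the roles of the two Hamiltonians exchanged, so that the two terms of \eqref{eqn: quantum commutator} cancel. The Casimir cases $p=-1$ or $q=-1$ are immediate, since $G_{\alpha,-1}=\eta_{\alpha\mu}u^\mu$ is linear in the $p$-variables and contracts trivially.

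The main obstacle is the bookkeeping of Koszul signs, which is the only genuinely new point compared with \cite{buryak_rossi_15}. Signs enter from three sources: the skew-symmetry of $\eta$ on $V_1\otimes V_1$ used at every contraction, the reordering of the odd $p$-variables both when the derivatives in the star product act and when the two Hamiltonians are transposed, and the Koszul signs built into the gluing axioms (iii) and (iv). I expect the crux of the argument to be checking that these contributions combine so that exchanging $(\alpha,p)\leftrightarrow(\beta,q)$ reproduces precisely the sign $(-1)^{\deg_{\mathbb{Z}_2}(\overline{G}_{\alpha,p})\cdot\deg_{\mathbb{Z}_2}(\overline{G}_{\beta,q})}$ of the commutator; granting this, the ungraded cancellation goes through verbatim and the proposition follows.
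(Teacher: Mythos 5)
The first thing to note is that the paper contains no proof of Proposition \ref{proposition:commutativity}: it is quoted wholesale from \cite{buryak_rossi_15}, and the adaptation to the $\mathbb{Z}_2$-graded setting is covered only by the blanket remark opening Section 1 that the constructions of \cite{buryak_rossi_15} are recalled ``pointing out the adjustments needed for the $\mathbb{Z}_2$-graded case''. So there is no in-paper argument to compare against; the comparison is with the proof in \cite{buryak_rossi_15} and with the paper's implicit claim that the graded case requires only consistent Koszul signs. Measured against that, your plan --- expand \eqref{eqn: quantum commutator} in the $p$-variables, read each contraction as a gluing via axioms (iii)/(iv), quote the ungraded cancellation, and check the signs --- is the natural one, and your supporting observations (the weight $k$ killing the $k=0$ modes, the factorization of the Hodge bundle over separating nodes versus the trivial summand over non-separating ones, the matching of $\hbar$-powers, the triviality of the Casimir case $p=-1$) are all correct.

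Nevertheless, as a proof the proposal has a genuine gap, located exactly where new content would have to be produced. First, your description of the cancellation mechanism is not what happens in \cite{buryak_rossi_15}: the contribution of $\overline{G}_{\alpha,p}\star\overline{G}_{\beta,q}$ is \emph{not} matched term-by-term, after reindexing the node momentum, by that of $\overline{G}_{\beta,q}\star\overline{G}_{\alpha,p}$. Already at one contraction (order $\hbar$) the difference of the two star products reassembles the sum over \emph{all} $k\in\mathbb{Z}$ of the gluing terms, i.e.\ the classical bracket, and its vanishing is a nontrivial geometric theorem resting on the splitting of $\mathrm{DR}_g$ times $\lambda$-classes over separating boundary divisors and the vanishing of the relevant Hodge classes on non-separating strata; were the cancellation a formal reindexing, commutativity would hold with the DR cycle and Hodge class replaced by arbitrary classes, which is false. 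Citing this as a black box is legitimate (the paper does the same), but then it should be cited, not re-described incorrectly. Second, and more importantly, the Koszul-sign verification --- which you yourself identify as the only genuinely new point --- is precisely what is left unproved (``I expect the crux of the argument to be checking\dots granting this\dots''). It cannot be dismissed as letting the ungraded argument go through ``verbatim'': for the elliptic curve, $\overline{G}_{2,0}=\int u^1u^3\,dx$ and $\overline{G}_{3,0}=-\int u^1u^2\,dx$ are \emph{odd} local functionals, so for $(\alpha,\beta)=(2,3)$ the proposition asserts the vanishing of an \emph{anticommutator} $\overline{G}_{2,0}\star\overline{G}_{3,0}+\overline{G}_{3,0}\star\overline{G}_{2,0}$, a statement with no literal counterpart in \cite{buryak_rossi_15}; its validity depends on the interplay between the plus sign and the skew-symmetry of $\eta$ on $V_1\otimes V_1$, woven through every contraction of the geometric argument. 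A complete proof must carry these signs through the cancellation itself, and this is exactly the step the proposal defers.
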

﻿
Moreover, we can reconstruct the entire quantum double ramification hierarchy from the knowledge of $\overline{G}_{1,1}$ (and hence from $\overline{G}$) alone: 
\begin{prop}[\cite{buryak_rossi_15}] \label{prop:recursion relation for qHam}
For all $\alpha=1,\ldots,N$ and $p\in\mathbb{Z}_{\geq -1}$ we have:
\begin{equation*}
\partial _{x}\Big( \varepsilon \frac{\partial}{\partial \varepsilon} + 2\hbar \frac{\partial}{\partial \hbar} + \sum_{s=0}^\infty u^{\alpha}_s \frac{\partial}{\partial u^{\alpha}_s}-1\Big) G_{\alpha ,p+1} =\frac{1}{\hbar }[ G_{\alpha ,p} ,\overline{G}_{1,1}].
\end{equation*}
\end{prop}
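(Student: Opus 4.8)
The plan is to prove the identity by expanding both sides in the $p$-variables and matching, monomial by monomial, the coefficient of each $\prod_i p^{\alpha_i}_{a_i}e^{\cxi a_i x}$ as an intersection number on a moduli space of curves. The two structural inputs are the behaviour of the defining classes $\mathrm{DR}_g$, $\Lambda$, $c_{g,n+1}$ and $\psi_1$ under the forgetful and gluing morphisms, and the explicit form of the star product in \eqref{eqn: quantum commutator}.

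First I would treat the left-hand side. Writing $\mathcal{O}:=\varepsilon\partial_\varepsilon+2\hbar\partial_\hbar+\sum_s u^\alpha_s\partial_{u^\alpha_s}-1$, this operator is diagonal on the natural grading of $G_{\alpha,p+1}$: on the summand of genus $g$ with $n$ factors $u$ and a fixed Hodge term $\lambda_j$ (which carries $\varepsilon^{2j}$ and $\hbar^{g-j}$ up to powers of $\cxi$) the three counting operators return $2j$, $2(g-j)$ and $n$, so $\mathcal{O}$ acts as multiplication by $2j+2(g-j)+n-1=2g-2+(n+1)$. This scalar is exactly the dilaton weight attached to $\overline{M}_{g,n+1}$, which already signals that $\overline{G}_{1,1}$ enters as the dilaton Hamiltonian, cf.\ \eqref{eq: dilaton}. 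The outer $\partial_x$ then multiplies each Fourier monomial by $\cxi\sum_i a_i$, the total momentum of the surviving insertions.

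Next I would expand the right-hand side. By definition of $\star$, the object $\tfrac1\hbar[G_{\alpha,p},\overline{G}_{1,1}]$ is a sum over the number $m\geq 1$ of contracted pairs of $p$-variables; the $m$-fold contraction glues the source curve of $G_{\alpha,p}$ to that of $\overline{G}_{1,1}$ at $m$ nodes, each contributing a factor $\cxi\hbar k\eta$. Because the distinguished insertion $e_1\otimes\psi_1$ of $\overline{G}_{1,1}$ carries no $p$-variable, the dilaton marked point is never contracted and survives on the glued curve. The gluing axioms for $c_{g,n}$, the multiplicativity of $\Lambda$ under $\mathsf{gl_1},\mathsf{gl_2}$ (the total Hodge Chern class splits), and the corresponding splitting of $\mathrm{DR}_g$ then turn each term into an intersection number over a single space $\overline{M}_{g,n+2}$ carrying the $\alpha$-insertion with $\psi_1^p$, the surviving dilaton insertion with $\psi^1$, and $n$ ordinary $p$-points; here $g=g_1+g_2+(m-1)$, which together with the prefactor $\tfrac1\hbar$ reproduces the genus grading $(\cxi\hbar)^g$ of $G_{\alpha,\bullet}$ (up to the bookkeeping powers of $\cxi$). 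The $\mathbb{Z}_2$-signs are trivial throughout, since the only insertions created along the nodes are the even class $\eta$ and, at the surviving point, the even unit $e_1$.

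It then remains to collapse the surviving dilaton point by the forgetful map $\pi\colon\overline{M}_{g,n+2}\to\overline{M}_{g,n+1}$: the classes $\mathrm{DR}_g$, $\Lambda$ and $c_{g,n+2}(\dots\otimes e_1)=\pi^*c_{g,n+1}(\dots)$ become pullbacks, the dilaton relation supplies the scalar $2g-2+(n+1)$ from the first step, and the comparison $\psi_1=\pi^*\psi_1+D$ at the $\alpha$-point, combined with the string/topological-recursion relations for intersection numbers against $\mathrm{DR}_g$ and the node-momentum summation weighted by $k$, performs the descendent shift $\psi_1^p\mapsto\psi_1^{p+1}$ and assembles the leftover momentum into the factor $\cxi\sum_i a_i$ produced by $\partial_x$. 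The antisymmetrisation in the commutator removes the total-momentum-zero part, consistently with $[\overline{G}_{\alpha,p},\overline{G}_{1,1}]=0$ of Proposition~\ref{proposition:commutativity}, so the right-hand side is genuinely a $\partial_x$-derivative. The hard part will be precisely this last step: establishing, uniformly in the number $m$ of nodes (that is, through all the genus-raising quantum corrections), the combined splitting identity for $\mathrm{DR}_g\cdot\Lambda$ together with the fact that the $k$-weighted node summation conspires with the explicit polynomial $a$-dependence of $\mathrm{DR}_g$ (Hain/Pixton) to implement both the scalar $2g-2+(n+1)$ and the shift $\psi_1^p\mapsto\psi_1^{p+1}$ at once. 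The bosonic case of this computation is \cite{buryak_rossi_15}; in the $\mathbb{Z}_2$-graded setting it suffices to add the verification that the Koszul signs remain trivial, as noted above.
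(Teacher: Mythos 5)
First, note that the paper itself contains no proof of this proposition: it is quoted, with attribution, from \cite{buryak_rossi_15}, so your attempt can only be measured against the proof given there. Your structural analysis does match that proof in several respects: the computation that $\varepsilon\partial_\varepsilon+2\hbar\partial_\hbar+\sum_s u^\alpha_s\partial_{u^\alpha_s}-1$ acts on the genus-$g$, $n$-point, $\lambda_j$-part of $G_{\alpha,p+1}$ by $2j+2(g-j)+n-1=2g-2+(n+1)$ is correct and is indeed how the dilaton factor enters; the expansion of $\tfrac1\hbar[G_{\alpha,p},\overline G_{1,1}]$ as a sum over $m$-fold contractions, each gluing the two source curves at $m$ nodes with weight $\cxi\hbar k\eta$ and total genus $g=g_1+g_2+(m-1)$, is also correct, as is the appeal to the CohFT gluing axioms, the restriction properties of $\Lambda$ on boundary strata, and the dilaton equation for the surviving $e_1\psi$-point.

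The gap is the core identity, which you name but neither state nor prove. What makes the argument in \cite{buryak_rossi_15} close is the splitting formula of Buryak--Shadrin--Spitz--Zvonkine for psi classes on double ramification cycles: $a_1\psi_1\,\mathrm{DR}_g(a_1,A)$ equals a weighted sum, over two-vertex stable graphs with an \emph{arbitrary} number $m\geq 1$ of edges, of glued classes $\mathrm{DR}_{g_1}\boxtimes\mathrm{DR}_{g_2}$, with weight $\prod_e k_e$ divided by the number of edges; it is precisely the multi-edge graphs (which survive against $\lambda_j$ for $j\leq g-m+1$, though they die against $\lambda_g$, whence the purely classical bracket in the $\hbar\to 0$ limit) that match the $\hbar^{m-1}$-corrections of the star product. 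The mechanism you substitute for it --- collapsing the dilaton point and then using ``$\psi_1=\pi^*\psi_1+D$'' plus string/topological-recursion relations to shift $\psi_1^{p}\mapsto\psi_1^{p+1}$ --- is not what produces the shift and would not produce it: the extra $\psi_1$, together with the factor $a_1=-\sum_i a_i$ accounting for the outer $\partial_x$, comes from reading the BSSZ identity backwards, not from forgetful-map comparisons. Relatedly, your intermediate claim that each individual $m$-node term ``turns into an intersection number over a single space $\overline M_{g,n+2}$'' is false term by term; only the $k$-weighted sum over \emph{all} splittings reassembles into a class supported on the DR cycle. Finally, you defer exactly this step to \cite{buryak_rossi_15}, which is circular, since that is the very paper the proposition is quoted from; and in the $\mathbb{Z}_2$-graded setting the Koszul signs in the gluing axiom and the star product do require an actual check --- they are not trivially absent just because $\eta$ and $e_1$ are even.
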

﻿Notice how, in light of Proposition \ref{proposition:commutativity}, the right-hand side of the equation in Proposition \ref{prop:recursion relation for qHam} lies in the image of the operator $\partial_x$, and how the operator in parenthesis in the left-hand side of the above equation has densities of Casimirs as kernel. This is why Proposition \ref{prop:recursion relation for qHam} allows to renconstruct the entire integrable hierarchy.
﻿
\section{Quantum hierarchy for elliptic curves}

\subsection{Gromov-Witten classes}
Let $E$ be a non-singular complex elliptic curve. 
Its cohomology $H^*(E)$ is a $\mathbb{Z}_2$-graded vector space spanned by $\{e_1, e_2, e_3, e_4\}$, where
\begin{itemize}
    \item $e_1 \in H^0(E)$ is the unit.
    \item $e_2, e_3 \in H^1(E)$ are odd classes such that $\int_E e_2 \cup e_3 = 1$.
    \item $e_4 \in H^2(E)$ is the Poincaré dual of a point, $\int_E e_4 = 1$.
\end{itemize}
The moduli space of stable maps to $E$ of degree $d$ from the genus $g$ curves with $n$ marked points is denoted by $\overline{M}_{g,n}(E,d)$ and we assume that  $2g-2+n>0$. There are natural maps
\begin{itemize}
    \item $\mathsf{src}_d\colon  \overline{M}_{g,n}(E,d) \rightarrow \overline{M}_{g,n}$ forgets the stable map and retains the stabilization of the source curve;
    \item $\mathsf{ev}_i\colon  \overline{M}_{g,n}(E,d) \rightarrow E$ is the evaluation map at the $i$-th marked point, $i=1,\dots,n$.
\end{itemize}
The cohomological field theory $\{c_{g,n}\}$ associated with the Gromov-Witten theory of $E$ is defined over the ring of formal power series $\mathbb{C}[\![ q ]\!]$ by
\begin{equation*}
    c_{g,n}(e_{\alpha_1} \otimes \ldots \otimes e_{\alpha_n}) \coloneqq \sum_{d\geq 0} (\mathsf{src}_d)_{*}\left([\overline{M}_{g,n}(E,d)]^{\mathrm{vir}}\textstyle\prod_{i=1}^n \mathsf{ev}_i^*(e_{\alpha_i})\right) q^d \in H^*(\overline{M}_{g,n})\otimes \mathbb{C}[\![ q ]\!].
\end{equation*}

\subsection{Quasimodular forms}
For even \(k \geq 2\), the \(k\)-weighted Eisenstein series is given by
\begin{equation*}
    \mathsf{G}_k(q) = -\frac{B_k}{2k} + \sum_{n \geq 1} \sigma_{k-1}(n) q^n = -\frac{B_k}{2k} + 
    \sum_{d\geq 1} d^{k-1} \frac{q^d}{1-q^d} ,
\end{equation*}
where \(B_k\) are the Bernoulli numbers and \(\sigma_k(n) = \sum_{d \mid n} d^k\) is the divisor function. 
The algebra of quasimodular forms if defined as
\begin{equation*}
	\mathrm{QMod} = \mathbb{C}[\mathsf{G}_2, \mathsf{G}_4, \mathsf{G}_6].
\end{equation*}
It is graded by non-negative even integers, $\mathrm{QMod} = \bigoplus_{\ell=1}^\infty \mathrm{QMod}_{2\ell}$, where the grading (also called weight) is defined on the generators as $\deg_{\mathrm{QMod}} (\mathsf{G}_k) \coloneq k$, $k=2,4,6$.  Note that for all even \(k \geq 2\), $\mathrm{QMod}_k \ni \mathsf{G}_k$. 
﻿
There are two differential operators that are often used in computations with quasimodular forms:
\begin{align*}
    \frac{d}{d\mathsf{G}_2} & \colon \mathrm{QMod}_k \rightarrow \mathrm{QMod}_{k-2}; \\
    D_q \coloneqq q \frac{d}{dq} &\colon \mathrm{QMod}_k \rightarrow \mathrm{QMod}_{k+2}.
\end{align*}
Their commutator $\left[ \frac{d}{d\mathsf{G}_2}, D_q \right]$ acts on $\mathrm{QMod}_k$ as the operator of multiplication by $-2k$. 
﻿

﻿
\subsection{DR hierarchy potential}
Proposition~\ref{prop:recursion relation for qHam} and equation \eqref{eq: dilaton} imply that it is enough to know the DR hierarchy potential $\overline{G}$ to reconstruct the full system of quantum Hamiltonians from the recursion relations.\\

From equation \eqref{eq:DR potential}, to write down $\overline{G}$ we need to compute the following integrals:
\begin{align}\label{eq:integrals}
\int_{\overline{M}_{g,n}} \mathrm{DR}_{g}(A)\, \Lambda( \tfrac{-\varepsilon^2}{\cxi\hbar}) \,  c_{g,n}(e_{\alpha_{1}} \otimes \cdots \otimes e_{\alpha_{n}})
\end{align}
﻿
Let us collect some remarks that will restrict the number of non-trivial integrals we have to handle:
\begin{itemize}
	\item First of all, since the double ramification cycle $\mathrm{DR}_{g}\big( -\textstyle\sum_{i=1}^n a_{i}, A\big)$ is symmetric in $a_1,\dots,a_n$, while $c_{g,n}$ is graded-$S_n$-equivariant we have non-trivial contributions to~\eqref{eq:integrals} only when $c_{g,n}\big(\bigotimes_{i=1}^n e_{\alpha_i} \big)$ contains at most one $\alpha_i=2$ and at most one $\alpha_i=3$.
	\item Second, by~\cite{janda-targetcurves}, the number of $\alpha_i$'s equal to $2$ should be the same as the number of $\alpha_i$'s equal to $3$, otherwise $c_{g,n}\big(\bigotimes_{i=1}^n e_{\alpha_i}\big)$ vanishes. 
	\item Third, by~\cite{pixton_oberdieck_23}, the class $\lambda_g \, c_{g,n}\big(\bigotimes_{i=1}^n e_{\alpha_i}\big)$ vanishes for $g\geq 1$.
	\item Finally, the total cohomological degree of the class in \eqref{eq:integrals} must be equal to $2(3g-3+n)$, and the cohomological degrees of the involved classes are given by 
	\[
	\dim \mathrm{DR}_{g}(A) = 2g, \quad \dim \lambda_j = 2j, \quad \text{and} \quad  \dim c_{g,n}(e_{\alpha_{1}} \otimes \cdots \otimes e_{\alpha_{n}}) = 2g-2+\textstyle\sum_{i=1}^n \dim e_{\alpha_i}.
	\]
\end{itemize} 
Thus the potentially nontrivial integrals~\eqref{eq:integrals} are reduced to:
\begin{align} \label{eq:integrals g11 genus 0}
		& \int_{\overline{M}_{0,n}} c_{0,n}(e_4\otimes e_1^{\otimes (n-1)}), \qquad 
		\int_{\overline{M}_{0,n}} c_{0,n}(e_2\otimes e_3\otimes  e_1^{\otimes (n-2)}),  \\
		\label{eq:integrals g11 lambda g-1} 
	& \int_{\overline{M}_{g,n}} \mathrm{DR}_g(A)\, \lambda_{g-1}\, c_{g,n}(e_1\otimes e_4^{\otimes (n-1)}), \qquad  
		\int_{\overline{M}_{g,n}} \mathrm{DR}_g(A)\, \lambda_{g-1}\, c_{g,n}(e_2\otimes e_3\otimes e_4^{\otimes (n-2)}), \\
	\label{eq: integral G11 lambda g-2}
	& \int_{\overline{M}_{g,n}} \mathrm{DR}_g(A)\, \lambda_{g-2}\, c_{g,n}(e_4^{\otimes n}). 
\end{align}
By axiom (v) of cohomological field theories, the genus $0$ integrals~\eqref{eq:integrals g11 genus 0} vanish unless $n=3$, and for $n=3$ both integrals are equal to $1$, see e.g.~\cite{buryak_22}. The higher genera integrals with $\lambda_{g-1}~\eqref{eq:integrals g11 lambda g-1}$ are computed in~\cite{pixton_oberdieck_23}:
\begin{prop}[{\cite[Theorem 6.10]{pixton_oberdieck_23}}]
	\label{prop: omega-class}
	For $g\geq 1, n\geq 1$ and $A=(a_1,\ldots,a_n)\in \mathbb{Z}^n$, we have 
\begin{align*}
			\int _{\overline{M}_{g,n}} \mathrm{DR}_{g}( A)\, \lambda_{g-1}\, c_{g,n}\left( e_1\otimes e_{4}^{n-1}\right) & = \frac {a_{1}^{2}}{2^{2g-2}}\sum _{\sum b_{i} =g-1}\prod _{i=1}^{n}\frac{a_{i}^{2b_i}}{( 2b_{i} +1) !} \ D_{q}^{n-2} \mathsf{G}_{2g}
			\\
				\int _{\overline{M}_{g,n}} \mathrm{DR}_{g}( A) \, \lambda_{g-1}\, c_{g,n}\left( e_{2} \otimes e_3 \otimes e_{4}^{n-2}\right) & =\frac {-a_{1}a_{2}}{2^{2g-2}}\sum _{\sum b_{i} =g-1}\prod _{i=1}^{n}\frac{a_{i}^{2b_i}}{( 2b_{i} +1) !} \ D_{q}^{n-2} \mathsf{G}_{2g}
\end{align*}
\end{prop}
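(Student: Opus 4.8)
The plan is to treat each integral, for fixed $A$, as an element of $\mathbb{C}[\![q]\!]$ and, as $A$ varies on $\sum_i a_i=0$, as a polynomial of degree $\le 2g$ in the $a_i$ with $q$-series coefficients; I would then pin it down by combining three ingredients: the quasimodularity of the stationary Gromov--Witten theory of $E$, a Grothendieck--Riemann--Roch (GRR) analysis of the $\lambda_{g-1}$-insertion, and a recursion in the number of point-insertions. First I would record the structural constraints. By Okounkov--Pandharipande the stationary invariants of $E$ are quasimodular, and in the refinement incorporating Hodge classes the $q$-coefficient lands in $\mathrm{QMod}$ of weight governed by the genus. The seed (at the smallest admissible $n$) should carry weight $2g$, forcing it into $\mathrm{QMod}_{2g}$, and the candidate $\mathsf{G}_{2g}$ together with its $D_q$-images is exactly the span that can occur; so the skeleton $D_q^{n-2}\mathsf{G}_{2g}$ is the only possibility compatible with the weight grading $D_q\colon \mathrm{QMod}_k\to\mathrm{QMod}_{k+2}$.

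Next I would extract the $A$-dependence. The closed factor is the key signature, and I would recognize it as a coefficient extraction,
\[
\sum_{\sum b_i=g-1}\ \prod_{i=1}^n \frac{a_i^{2b_i}}{(2b_i+1)!} \;=\; [z^{2g-2}]\,\prod_{i=1}^n \frac{\sinh(a_i z)}{a_i z},
\]
which is precisely the shape produced by Mumford's GRR computation of the Chern character of the Hodge bundle, evaluated against the principal (top $a$-degree) part of $\mathrm{DR}_g(A)$, given by Hain's formula as a power of $\sum_i a_i^2\psi_i$ plus separating-boundary corrections. To make this rigorous I would first show that the lower $a$-degree parts of the DR polynomial integrate to zero against $\lambda_{g-1}\,c_{g,n}(e_1\otimes e_4^{n-1})$, using the vanishing $\lambda_g\,c_{g,n}=0$ for $g\ge 1$ recalled above together with the restriction properties of $\lambda$-classes on the boundary; this yields homogeneity of degree $2g$ in $A$, with the extra $a_1^2$ accounting for the distinguished insertion $e_1$ (which carries no $\psi$) saturating the DR degree. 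The $\sinh$-factor would then be verified by matching the relevant Hodge integrals $\int \psi^{\bullet}\lambda_{g-1}$ against the coefficients of the generating series.

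It remains to fix the $q$-series. Here I would use the point-insertion recursion: adding one marked point carrying $e_4=[\mathrm{pt}]$, which is a divisor on the curve $E$, acts on the stationary theory through the divisor equation, multiplying the degree-$d$ term by $d$, i.e.\ as $D_q=q\,d/dq$; iterating produces the factor $D_q^{n-2}$. The base case (smallest $n$, namely $n=2$) would be computed directly to identify the seed as $\mathsf{G}_{2g}$ and to fix the overall constant $2^{-(2g-2)}$. The second identity, with the odd insertions $e_2\otimes e_3$, runs in exact parallel: by Janda's balancing constraint the numbers of $e_2$ and $e_3$ must agree, the $q$-series factor is unchanged (both identities exhibit the same $D_q^{n-2}\mathsf{G}_{2g}$), and the prefactor becomes $-a_1 a_2$, with the sign and the pairing of $a_1,a_2$ dictated by the graded-symmetric form $\eta$ on $V_1\otimes V_1$ together with $\int_E e_2\cup e_3=1$.

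I expect the main obstacle to be the middle step made fully rigorous: proving that the weight-$2g$ quasimodular output is exactly $\mathsf{G}_{2g}$ (rather than an a priori arbitrary element of $\mathrm{QMod}_{2g}$) with the precise normalization, and that the point-insertion recursion is exactly $D_q$ with the divisor-equation $\psi$-corrections and all boundary contributions under control. This is where the operator (Fock-space) formalism of the Gromov--Witten/Hurwitz correspondence, combined with the holomorphic-anomaly/quasimodularity package for $E$, must be invoked carefully; the $\sinh$-generating function and the appearance of a single Eisenstein series are clean only once that machinery has eliminated the lower-weight and boundary terms.
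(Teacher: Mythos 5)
First, a point of comparison: the paper does not prove this proposition at all --- it is imported verbatim as \cite[Theorem 6.10]{pixton_oberdieck_23}, and the machinery behind it (Oberdieck--Pixton's intersection theory of the double ramification cycle with the Gromov--Witten classes of $E$) is precisely what is being cited, not reproduced. So your sketch is not an alternative route to the paper's argument; it is an attempt to reprove a substantial external theorem, and as such it has genuine gaps. The most serious one is the claim that the weight grading forces the answer to be a multiple of $D_q^{n-2}\mathsf{G}_{2g}$. This is false: $\mathrm{QMod}_{2g}$ is not one-dimensional for $g\geq 2$ (already $\mathrm{QMod}_4$ is spanned by $\mathsf{G}_2^2$ and $\mathsf{G}_4$), so no weight count can single out $\mathsf{G}_{2g}$, let alone fix the normalization $2^{-(2g-2)}$. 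Moreover, the quasimodularity you invoke (Okounkov--Pandharipande) concerns stationary descendent invariants of $E$; its extension to integrals against $\mathrm{DR}_g(A)$ and $\lambda_{g-1}$ is itself part of what Oberdieck--Pixton prove, so appealing to ``the refinement incorporating Hodge classes'' is close to assuming the theorem.

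The other two steps also fail as stated. Hain's formula for $\mathrm{DR}_g(A)$ is valid only on the compact-type locus, and it is $\lambda_g$, not $\lambda_{g-1}$, that kills the non-compact-type boundary contributions; so your proposed GRR/Hain evaluation of the $A$-dependence, including the asserted vanishing of the lower $a$-degree parts of the DR polynomial against $\lambda_{g-1}\,c_{g,n}$, is unjustified --- and even on compact type the integrand contains the class $c_{g,n}$ of the elliptic curve, so Mumford-type matching of pure Hodge integrals $\int\psi^\bullet\lambda_{g-1}$ does not apply; the $A$-dependence and the $q$-dependence cannot be separated a priori. Finally, the divisor-equation recursion producing $D_q$ is only valid when the added point carries $a_{n+1}=0$, since only then $\mathrm{DR}_g(A,0)=\pi^*\mathrm{DR}_g(A)$ is pulled back under the forgetful map and the projection formula applies; for $a_{n+1}\neq 0$ the new point enters the DR polynomial nontrivially --- this is exactly where the factors $a_i^{2b_i}/(2b_i+1)!$ come from --- so iterating the divisor equation can never recover the formula for general $A$. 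The base case $n=2$, which you defer to a ``direct computation,'' is essentially the full strength of the theorem in that case and is left unaddressed. Your closing paragraph correctly identifies these as the obstacles, but identifying them is not overcoming them: what remains is precisely the content of the cited result.
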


Finally, the integral~\eqref{eq: integral G11 lambda g-2} vanishes by the following Proposition. 

\begin{prop}\label{prop: vanishing}
Let $(a_1,\dots,a_n)\in\mathbb{Z}^n$ with $\sum_i a_i=0$ and $g\geq 2$.
Then
\[ 
\int_{\overline{M}_{g,n}} \mathrm{DR}_g(a_1,\ldots,a_n) \; \lambda_{g-2}\;c_{g,n}\!\big(e_4^{\otimes n}\big) =0.
\]
\end{prop}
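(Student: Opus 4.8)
The plan is to transport the entire intersection number to the moduli space of stable maps to $E$ and there exploit the elementary fact that the pushforward of a principal divisor along a map $f\colon C\to E$ is again principal. First I would apply the projection formula. Writing $c_{g,n}(e_4^{\otimes n})=\sum_{d\geq 0}q^d (\mathsf{src}_d)_*\big([\overline{M}_{g,n}(E,d)]^{\mathrm{vir}}\prod_{i=1}^n \mathsf{ev}_i^*(e_4)\big)$ and using that the Hodge class pulls back, $\mathsf{src}_d^*\lambda_{g-2}=\lambda_{g-2}$, the integral becomes
\[
\sum_{d\geq 0} q^d \int_{[\overline{M}_{g,n}(E,d)]^{\mathrm{vir}}} \mathsf{src}_d^*\big(\mathrm{DR}_g(A)\,\lambda_{g-2}\big)\,\prod_{i=1}^n \mathsf{ev}_i^*(e_4).
\]
Since $e_4$ is the point class, $\prod_i \mathsf{ev}_i^*(e_4)=\mathsf{ev}^*\big([\mathrm{pt}]^{\times n}\big)$ for the total evaluation $\mathsf{ev}=(\mathsf{ev}_1,\dots,\mathsf{ev}_n)\colon \overline{M}_{g,n}(E,d)\to E^n$, so the $d$-th summand equals $\int_{E^n}[\mathrm{pt}]^{\times n}\cup \mathsf{ev}_*\big([\overline{M}_{g,n}(E,d)]^{\mathrm{vir}}\cap \mathsf{src}_d^*(\mathrm{DR}_g(A)\lambda_{g-2})\big)$. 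A dimension count, using $\operatorname{vdim}\overline{M}_{g,n}(E,d)=2g-2+n$ and that $\mathrm{DR}_g(A)\lambda_{g-2}$ has complex codimension $2g-2$, shows the capped cycle has dimension $n$; here $\lambda_{g-2}$ plays only the spectator role of supplying the correct codimension.

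The geometric core is then to locate the support of this pushed-forward cycle. The class $\mathrm{DR}_g(A)$ is supported on the double ramification locus, the closure of the set of smooth pointed curves $(C,x_1,\dots,x_n)$ with $\mathcal{O}_C\big(\sum_i a_i x_i\big)\cong \mathcal{O}_C$, so $[\overline{M}_{g,n}(E,d)]^{\mathrm{vir}}\cap \mathsf{src}_d^*\mathrm{DR}_g(A)$ is supported on maps whose stabilized source lies in this locus. For such a map $f\colon C\to E$ with $\sum_i a_i x_i$ principal, pushing the divisor forward gives that $f_*\big(\sum_i a_i x_i\big)=\sum_i a_i f(x_i)$ is principal on $E$; by Abel's theorem on the elliptic curve this forces $\sum_i a_i f(x_i)=0$ in the group law of $E$. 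Hence the evaluation image $\mathsf{ev}([f])=(f(x_1),\dots,f(x_n))$ lies in
\[
Z:=\Big\{(P_1,\dots,P_n)\in E^n : \textstyle\sum_{i=1}^n a_i P_i=0\Big\}.
\]
When $A\neq 0$ the homomorphism $E^n\to E$, $(P_i)\mapsto \sum_i a_i P_i$, is surjective (if $a_j\neq 0$, multiplication by $a_j$ is already an isogeny on the $j$-th factor), so $Z$ is a proper closed subvariety of dimension $n-1$. An $n$-dimensional cycle whose support maps into an $(n-1)$-dimensional subvariety has vanishing pushforward in $H_{2n}(E^n)$; pairing with $[\mathrm{pt}]^{\times n}$ then yields $0$ for every $d$, and the integral vanishes.

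The degenerate case $A=0$ falls outside this argument, since then $Z=E^n$. Here I would instead use that the double ramification cycle with trivial profile satisfies $\mathrm{DR}_g(0,\dots,0)=(-1)^g\lambda_g$, so the integrand becomes $(-1)^g\lambda_g\,\lambda_{g-2}\,c_{g,n}(e_4^{\otimes n})$, which vanishes by the relation $\lambda_g\,c_{g,n}=0$ recorded in the third remark above (valid for $g\geq 1$).

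The main obstacle is rigor on the boundary and at the virtual level, where the clean picture of ``principal divisor pushes to principal divisor'' must be justified for nodal source curves and for the rubber/relative maps entering the definition of $\mathrm{DR}_g$. Concretely I must ensure that (i) capping the virtual class with $\mathsf{src}_d^*\mathrm{DR}_g(A)$ is genuinely supported, as a Chow or Borel--Moore class, over the preimage of the double ramification locus, including its boundary strata (which requires a refined Gysin pullback rather than a bare cohomological pullback); and (ii) the constraint $\sum_i a_i f(x_i)=0$ persists on those strata, where the Jacobian of $C$ degenerates and one must work with the generalized/compactified Jacobian and the norm map $\mathrm{Pic}(C)\to \mathrm{Pic}(E)$. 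Both points should follow from the compatibility of the Abel--Jacobi map with degeneration together with the definition of $\mathrm{DR}_g$ via stable maps to the rubber, but this is the step demanding genuine care, as opposed to the formal manipulations above.
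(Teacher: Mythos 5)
Your reduction via the projection formula, the dimension count, and your treatment of the degenerate case $A=0$ (via $\mathrm{DR}_g(0,\dots,0)=(-1)^g\lambda_g$ and the Oberdieck--Pixton vanishing $\lambda_g\,c_{g,n}=0$) all match the paper, and the geometric heart of your argument --- push a principal divisor forward along a holomorphic map to $E$, then use Abel's theorem to force the evaluation image into $Z=\{\sum_i a_iP_i=0\}$ --- is exactly the paper's key observation. But the framework in which you run this argument has a genuine gap, which you flag in your last paragraph and do not close. First, $\mathrm{DR}_g(A)$ is not known to be supported on ``the closure of the locus of smooth pointed curves with $\mathcal{O}_C(\sum_i a_ix_i)\cong\mathcal{O}_C$'': by definition it is $\mathsf{src}_*$ of the rubber virtual class, so the support one can legitimately use is the full image of $\mathsf{src}$, which in general contains boundary components (rubber maps whose source is forced to be nodal) not contained in that closure; exploiting even this support requires the cohomology-with-supports/refined Gysin bookkeeping you allude to. Second, and more seriously, on such boundary points your divisor pushforward does not parse as stated: a point of $\mathsf{src}_d^{-1}\big(\mathrm{Im}(\mathsf{src})\big)$ is a stable map $F\colon C\to E$ whose \emph{stabilization} $C^{\mathrm{st}}$ admits a rubber map $f\colon\widetilde{C}\to\mathbb{P}^1$ defined on a possibly different semistable model $\widetilde{C}$ of $C^{\mathrm{st}}$; the divisor of $f$ lives on $\widetilde{C}$ while $F$ lives on $C$, so ``push the divisor of $f$ forward along $F$'' requires first putting the two maps on one and the same curve. (This is repairable: $E$ contains no rational curves, so $F$ is constant on every component contracted by stabilization and hence descends to $C^{\mathrm{st}}$, after which one composes with the contraction $\widetilde{C}\to C^{\mathrm{st}}$; but this step, where the genus-one nature of the target actually enters, is missing.)

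The paper dissolves both difficulties simultaneously by choosing a different moduli space: instead of pulling $\mathrm{DR}_g(A)$ back to $\overline{M}_{g,n}(E,d)$, it expresses the whole product $\mathrm{DR}_g(A)\,c_{g,n}(e_4^{\otimes n})$ as the $\mathsf{src}$-pushforward of $\big[\overline{M}^\sim_{g,n_0}(E\times\mathbb{P}^1,d,A_-,A_+)\big]^{\mathrm{vir}}\prod_i\mathsf{ev}_i^*(e_4)$, where $\overline{M}^\sim_{g,n_0}(E\times\mathbb{P}^1,d,A_-,A_+)$ is the space of relative stable maps to the rubber trivial $\mathbb{P}^1$-bundle over $E$. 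There every point carries \emph{both} maps --- $F$ to $E$ and $f$ to the rubber --- on the \emph{same} nodal curve, so the component-by-component pushforward of principal divisors, with node contributions cancelling in pairs, is an honest set-theoretic statement: for any $p_1,\dots,p_n$ with $\sum_i a_ip_i$ non-principal, the intersection $\bigcap_i\mathsf{ev}_i^{-1}(p_i)$ is empty, hence the class $\prod_i\mathsf{ev}_i^*(e_4)$ vanishes on that space and no support analysis of any virtual class is needed. The price is the product-type identity relating the original integral to this combined space, which the paper asserts as standard. In short: you found the right geometric lemma, but your route leaves real work at exactly the points (i)--(ii) you list, whereas the paper's choice of moduli space makes those points vacuous.
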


\begin{proof} If $a_1=\cdots=a_n=0$, then  $\mathrm{DR}_g(a_1,\ldots,a_n) = (-1)^g\lambda_g$ and the desired vanishing follows from the vanishing of $\lambda_g \, c_{g,n}\big(\bigotimes_{i=1}^n e_{\alpha_i}\big)$ for $g\geq 1$ that we recall above.
	
Assume that not all $a_i$ vanish. Then we prove a more refined statement, using the following interpretation of the product $\mathrm{DR}_g(a_1,\ldots,a_n) \;c_{g,n}\!\big(e_4^{\otimes n}\big)$. As before, let $A_+$ denote the subtuple of $A$ consisting of all the positive integers in $A$, and let $A_-$ denote the subtuple of $A$ consisting of all the negative integers in $A$, and let $n_0$ be the number of $a_i$'s that are equal to $0$. Namely, let $\overline{M}^\sim_{g,n_0}(E\times \mathbb{P}^1, d, A_-, A_+)$ be the moduli space of stable relative maps of connected genus $g$ curves to the rubber trivial bundle over $E$, whose projection to $E$ has degree $d$. By relative we mean relative to the zero and infinity sections, with ramification profiles given by $A_-$ and $A_+$, respectively. Let
\begin{equation*}
	\mathsf{src} \colon \overline{M}^\sim_{g,n_0}(E
	\times \mathbb{P}^1, d, A_-, A_+) \rightarrow \overline{M}_{g,n}
\end{equation*}
be the source map that forgets the stable relative map and retains the stabilization of the source curve. Then we have
\begin{align*}
& \int_{\overline{M}_{g,n}} \mathrm{DR}_g(a_1,\ldots,a_n) \; \lambda_{g-2}\;c_{g,n}\!\big(e_4^{\otimes n}\big) 
\\ & =
\sum_{d=0}^\infty q^d \int_{\overline{M}_{g,n}} \mathsf{src}_* \left(
\left[\overline{M}^\sim_{g,n_0}(E
\times \mathbb{P}^1, d, A_-, A_+)\right]^{\mathrm{vir}} \prod_{i=1}^n \mathsf{ev}_i^*(e_4)\right) \lambda_{g-2}
,
\end{align*}
where $\mathsf{ev}_i\colon \overline{M}^\sim_{g,n_0}(E
\times \mathbb{P}^1, d, A_-, A_+) \to E$ is the evaluation map at the $i$-th marked point for the relative stable map combined with the projection to $E$. 
	
Now the statement of the proposition follows from the following lemma:
\begin{lemma} Assume that not all $a_i$ vanish. Then for any $g,d\geq 0$ we have 
\[
\left[\overline{M}^\sim_{g,n_0}(E
\times \mathbb{P}^1, d, A_-, A_+)\right]^{\mathrm{vir}} \prod_{i=1}^n \mathsf{ev}_i^*(e_4) = 0.
\]
\end{lemma}

The proof of the lemma is based on the following general observation. Let $F\colon C_1\to C_2$ be a holomorphic map of smooth compact curves and $\sum_{i=1}^n b_i p_i$ be a principal divisor on $C_1$. Let us prove that then $\sum_{i=1}^n b_i F(p_i)$ is a principal divisor on $C_2$. If $F$ is constant map, then the statment is obvious. Assume $F$ is non-constant and let $\sum_{i=1}^k b_i p_i = (f)$, where $f\colon C_1\to \mathbb{P}^1$ is a meromorphic function. Then $\sum_{i=1}^k b_i F(p_i) = (g)$, with $g\colon C_2\to \mathbb{P}^1$ defined as $g(q) \coloneqq \prod_{p\in F^{-1}(q)} f(p)^{\mathrm{ord}_p F}$, where $\mathrm{ord}_p F$ denotes the local ramification degree of $F$ at $p\in C_1$. 
	
This observation implies that for a choice of points $p_1,\dots,p_n\in E$ such that $\sum_{i=1}^n a_i p_i$ is not a principal divisor on $E$, the geometric intersection $\cap_{i=1}^n \mathsf{ev}_i^{-1}(p_i)$ on $\overline{M}^\sim_{g,n_0}(E
\times \mathbb{P}^1, d, A_-, A_+)$ is empty. Indeed, let $(C,x_,\dots,x_n,F,f)\in \overline{M}^\sim_{g,n_0}(E
\times \mathbb{P}^1, d, A_-, A_+)$ be a possibly nodal curve $C$ with marked points $x_1,\dots,x_n$ whose normalization has $l$ components $C_1,\dots,C_l$, equipped with a map $F$ to $E$ whose restriction to $C_j$ is denoted by $F_j$, $j=1,\dots,l$, and a relative stable map $f$ to the rubber $\mathbb{P}^1$, whose restriction to $C_j$ is denoted by $f_j$, $j=1,\dots,l$. Note that
\begin{itemize}
	\item The divisor of $f_j$ on $C_j$ is principal and supported on nodes and marked points. Applying the observation above to the map $F_j\colon C_j\to E$, we conclude that the image of the divisor of $f_j$ under the map $F_j$ is a principal divisor on $E$, $j=1,\dots,l$. 
	\item Each marked point $x_i$ enters the divisor of exactly one $f_j$ with coefficient $a_i$. Each node enters the divisors of exactly two $f_j$'s, with the opposite coefficients. Hence, the sum of these principal divisors is equal to $\sum_{i=1}^n a_i F(x_i) = \sum_{i=1}^n a_i \mathsf{ev}_i(C,x_,\dots,x_n,F,f)$.
\end{itemize}
Thus,  $\cap_{i=1}^n \mathsf{ev}_i^{-1}(p_i)$ is empty if  $\sum_{i=1}^n a_i p_i$ is not a principal divisor on $E$, which implies the statement of the lemma and, as a corollary, the statement of the proposition.
\end{proof}

Given two power series $f(\varepsilon)= \sum_{i\geq 0} a_i \varepsilon^i\ \in R[[\varepsilon]]$ and $g(\varepsilon)= \sum_{i\geq 0} b_i \varepsilon^i \in M[[\varepsilon]]$ with $R$ a ring and $M$ a left module over $R$, consider their Hadamard product $(f \odot g)(\varepsilon) := \sum_{i \geq 0} a_i b_i \varepsilon^i \in M[[\varepsilon]]$. Moreover, for any power series $h(\varepsilon_1,\varepsilon_2) = \sum_{i,j \geq 0} c_{i,j} \varepsilon_1^i \varepsilon_2^j \in M[[\varepsilon_1,\varepsilon_2]]$, we define its diagonal as the power series $\operatorname{diag}_{\varepsilon_1,\varepsilon_2}\left\{ h(\varepsilon_1,\varepsilon_2)\right\} := \sum_{i\geq 0} c_{i,i} \varepsilon_1^i \in M[[\varepsilon_1]]$.\\

\begin{prop}\label{prop:main}
The quantum double ramification hierarchy for the Gromov–Witten theory of the elliptic curve is uniquely determined by the DR potential
\begin{align*}
\overline{G} =\int \Big[ \frac{(u^1)^2 u^4}{2} + u^1 u^2 u^3 + \frac{\cxi \hbar}{\varepsilon^2} \Big(& \sum_{\substack{g \geq 1 \\ n\geq 2}} \frac{\epsilon^{2g}}{2^{2g-2}(n-2)!} \sum_{\sum_{i=1}^n b_i = g-1} \Big(\frac{u^1_{2b_1+1}}{(2b_1+1)!}\frac{u^4_{2b_2+1}}{(2b_2+1)!}\\
&+ \frac{u^2_{2b_1+1}}{(2b_1+1)!}\frac{u^3_{2b_2+1}}{(2b_2+1)!}\Big)\prod_{i=3}^n \frac{u^4_{2b_i}}{(2b_i+1)!} D_q^{n-2} \mathsf{G}_{2g}(q)\Big)\Big]dx.
\end{align*}
or, equivalently,
\begin{align*}
\overline{G} = &\int \left[ \frac{(u^1)^2 u^4}{2} + u^1 u^2 u^3 + \cxi \hbar \left(\left(\mathcal{S}_\varepsilon (u^1_x)\,\mathcal{S}_\varepsilon(u^4_x) + \mathcal{S}_\varepsilon(u^2_x)\,\mathcal{S}_\varepsilon(u^3_x)\right)\,\exp\left(\mathcal{S}_\varepsilon (u^4)D_q\right)\right) \odot \mathcal{G}(\varepsilon,q)\right]dx \\
= & \int \left[ \frac{(u^1)^2 u^4}{2} + u^1 u^2 u^3 + \cxi \hbar \operatorname{diag}_{\varepsilon,\varepsilon'}\left\{ \left(\mathcal{S}_\varepsilon ( u^1_x)\,\mathcal{S}_\varepsilon( u^4_x) + \mathcal{S}_\varepsilon( u^2_x)\,\mathcal{S}_\varepsilon( u^3_x)\right)\mathcal{G}\left(\varepsilon',q\exp\left( \mathcal{S}_\varepsilon (u^4)\right)\right)\right\} \right]dx,\\
\end{align*}
where
$$\mathcal{S_\varepsilon} := \frac{\sinh(\frac{1}{2}\varepsilon \partial_x)}{\frac{1}{2}\varepsilon \partial_x} ,\qquad \mathcal{G}(\varepsilon,q) := \sum_{g\geq 0} \varepsilon^{2g} \mathsf{G}_{2g+2}(q). $$
\end{prop}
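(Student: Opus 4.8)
The plan is to start from the DR potential formula \eqref{eq:DR potential}, which expresses $\overline{G}$ as a sum over $g,n$ of the integrals \eqref{eq:integrals} against products of $u$-variables. The reductions listed before \eqref{eq:integrals g11 genus 0}--\eqref{eq: integral G11 lambda g-2} already cut the problem down to five families of integrals, of which one (the $\lambda_{g-2}$ family) vanishes by Proposition~\ref{prop: vanishing} and the genus-$0$ ones contribute only the classical cubic terms $\tfrac{(u^1)^2u^4}{2} + u^1u^2u^3$. So the entire $\hbar$-part of $\overline{G}$ comes from the two $\lambda_{g-1}$ integrals of Proposition~\ref{prop: omega-class}. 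First I would substitute those closed formulas into \eqref{eq:DR potential} and carefully track the combinatorial bookkeeping: the symmetric coefficients $\mathfrak{D}^{\mathrm{sym}}_g(K)$ must be read off from the polynomial presentation \eqref{eq:SYmmetricPolynomialDR} of the cycle, and the insertion of $e_1$ (or $e_2\otimes e_3$) in a distinguished slot together with $e_4$ in the remaining $n-2$ slots means the powers $a_i^{2b_i}$ of Proposition~\ref{prop: omega-class} translate into $u$-derivatives $u^{\alpha}_{2b_i}$ and $u^{\alpha}_{2b_i+1}$, with the shift by one reflecting the factor $a_1^2$ (resp.\ $a_1a_2$) carried by the special insertions. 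Matching the factorials $(2b_i+1)!$ and the prefactor $2^{2g-2}$ against the first displayed form of $\overline{G}$ is then a direct identification.

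The second step is to recognize the three generating-function packagings. The operator $\mathcal{S}_\varepsilon = \sinh(\tfrac12\varepsilon\partial_x)/(\tfrac12\varepsilon\partial_x)$ acting on $u^\alpha$ produces exactly the series $\sum_{b\geq 0}\varepsilon^{2b}u^\alpha_{2b}/(2b+1)!$ when expanded, since $\mathcal{S}_\varepsilon = \sum_{b\geq 0}(\tfrac12\varepsilon\partial_x)^{2b}/(2b+1)!$ and $\partial_x^{2b}u^\alpha = u^\alpha_{2b}$; applying it to $u^\alpha_x$ shifts the index by one and yields the $u^\alpha_{2b+1}/(2b+1)!$ appearing in the two leading factors. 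Thus the distinguished slots contribute $\mathcal{S}_\varepsilon(u^1_x)\mathcal{S}_\varepsilon(u^4_x)+\mathcal{S}_\varepsilon(u^2_x)\mathcal{S}_\varepsilon(u^3_x)$, the $n-2$ remaining $e_4$-slots assemble into $\exp(\mathcal{S}_\varepsilon(u^4)D_q)$ (the $D_q$ accounting for the operator $D_q^{n-2}$ in Proposition~\ref{prop: omega-class} and the $1/(n-2)!$ for the exponential), and the residual $\varepsilon^{2g}\mathsf{G}_{2g}$ data is repackaged into $\mathcal{G}(\varepsilon,q)=\sum_{g\geq 0}\varepsilon^{2g}\mathsf{G}_{2g+2}(q)$. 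The Hadamard product $\odot$ in $\varepsilon$ is precisely what pairs the $\varepsilon^{2g}$ weight coming from $\Lambda(-\varepsilon^2/(\cxi\hbar))$ and the DR-cycle degree with the matching $\varepsilon^{2g}$ weight inside $\mathcal{G}$, so that only the diagonal $g=g$ terms survive; here the overall $\cxi\hbar/\varepsilon^2$ in the first form becomes a clean $\cxi\hbar$ after the Hadamard pairing has consumed the $\varepsilon$-powers, with the index shift $\mathsf{G}_{2g}\leftrightarrow\mathsf{G}_{2g+2}$ absorbed by the $\varepsilon^{-2}$.

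The third step is the equivalence of the $\odot$-form with the $\operatorname{diag}_{\varepsilon,\varepsilon'}$-form. The key identity is that a Hadamard product in a single variable can be written as a diagonal of a two-variable series, and that $\exp(\mathcal{S}_\varepsilon(u^4)D_q)$ acting on a function of $q$ is the substitution $q\mapsto q\exp(\mathcal{S}_\varepsilon(u^4))$, since $D_q=q\,d/dq$ exponentiates to the multiplicative shift $q\mapsto e^{c}q$. Applying this with the second variable $\varepsilon'$ carrying the weight of $\mathcal{G}$ gives the stated closed form directly. I would verify this by expanding both sides as formal series in the $u$-variables and in $\varepsilon$, checking that the coefficient of each monomial matches; the exponentiation of $D_q$ is a standard one-parameter-group computation.

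The main obstacle I anticipate is the bookkeeping of the index shifts and factorials in the first step: one must be scrupulous about which slot carries the extra power of $a_i$ (hence the $u^\alpha_{2b+1}$ versus $u^\alpha_{2b}$), about the passage from the ordered sum over $(a_1,\dots,a_n)$ with the $1/n!$ prefactor to the symmetrized $u$-variable form with $1/(n-2)!$ (the factor $n(n-1)$ coming from choosing the two distinguished slots), and about reconciling the weight index of $\mathsf{G}_{2g}$ in Proposition~\ref{prop: omega-class} with $\mathsf{G}_{2g+2}$ in the definition of $\mathcal{G}$. Once these constants are pinned down, recognizing the operators $\mathcal{S}_\varepsilon$, $\exp(\cdots D_q)$, and the Hadamard/diagonal reformulation is comparatively mechanical.
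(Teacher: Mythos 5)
Your proposal is correct and follows essentially the same route as the paper's proof: substitute the genus-$0$ evaluations, Proposition~\ref{prop: omega-class} (with $a_1=-\sum_{i\geq 2}a_i$) and the vanishing of Proposition~\ref{prop: vanishing} into \eqref{eq:DR potential}, then repackage via $\mathcal{S}_\varepsilon$, $\exp(\mathcal{S}_\varepsilon(u^4)D_q)$, the Hadamard product, and the identity $e^{fD_q}\mathcal{G}(\varepsilon,q)=\mathcal{G}(\varepsilon,e^fq)$. One small slip in your step two: $\mathcal{S}_\varepsilon(u^\alpha)=\sum_{b\geq 0}\varepsilon^{2b}u^\alpha_{2b}/\bigl(2^{2b}(2b+1)!\bigr)$, not $\sum_{b\geq 0}\varepsilon^{2b}u^\alpha_{2b}/(2b+1)!$ — the powers of $2$ are exactly what absorbs the prefactor $2^{2g-2}$, as your first step anticipates.
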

\begin{proof}
The genus $0$ integrals~\eqref{eq:integrals g11 genus 0} vanish unless $n=3$, and for $n=3$ both integrals are equal to $1$ and by Proposition \ref{prop: omega-class} (exploiting $a_1 = -\sum_{i=2}^n a_i$ in the first type of intersection numbers) and Proposition \ref{prop: vanishing}, we compute formula \eqref{eq:DR potential} as
\begin{align*}
\overline{G}=\int \Big[ \frac{(u^1)^2 u^4}{2} + u^1 u^2 u^3 + \frac{\cxi \hbar}{\varepsilon^2} \Big(& \sum_{\substack{g \geq 1 \\ n\geq 2}} \frac{\epsilon^{2g}}{2^{2g-2}(n-2)!} \sum_{\sum_{i=1}^n b_i = g-1} \Big(\frac{u^1_{2b_1+1}}{(2b_1+1)!}\frac{u^4_{2b_2+1}}{(2b_2+1)!}\\
&+ \frac{u^2_{2b_1+1}}{(2b_1+1)!}\frac{u^3_{2b_2+1}}{(2b_2+1)!}\Big)\prod_{i=3}^n \frac{u^4_{2b_i}}{(2b_i+1)!} D_q^{n-2} \mathsf{G}_{2g}(q)\Big)\Big]dx.
\end{align*}

Next we remark that, for a formal variable $\delta$ and for $1\leq \alpha\leq 4$,
\begin{align*}
\sum_{b\geq 0}\frac{\delta^{2b}u^\alpha_{2b}}{2^{2b}(2b+1)!}  = \mathcal{S}_\delta(u^\alpha),
\end{align*}
so that
\begin{align*}
\sum_{\substack{g \geq 1 \\ n\geq 2}} \frac{\epsilon^{2g-2}}{2^{2g-2}(n-2)!}& \sum_{\sum_{i=1}^n b_i = g-1} \Big(\frac{u^1_{2b_1+1}}{(2b_1+1)!}\frac{u^4_{2b_2+1}}{(2b_2+1)!}
+ \frac{u^2_{2b_1+1}}{(2b_1+1)!}\frac{u^3_{2b_2+1}}{(2b_2+1)!}\Big)\prod_{i=3}^n \frac{u^4_{2b_i}}{(2b_i+1)!} D_q^{n-2} \mathsf{G}_{2g}(q)\\
& = \sum_{\substack{g \geq 1 \\ n \geq 2}}\frac{\varepsilon^{2g-2}}{(n-2)!}\operatorname{Coeff}_{\delta^{2g-2}}\left[\left(\mathcal{S}_\delta ( u^1_x)\,\mathcal{S}_\delta( u^4_x) + \mathcal{S}_\delta( u^2_x)\,\mathcal{S}_\delta( u^3_x)\right)\,\left(\mathcal{S}_\delta (u^4)\, D_q\right)^{n-2}\right]\mathsf{G}_{2g}(q)\\
& =\sum_{g \geq 1}\varepsilon^{2g-2} \operatorname{Coeff}_{\delta^{2g-2}}\left[\left(\mathcal{S}_\delta ( u^1_x)\,\mathcal{S}_\delta( u^4_x) + \mathcal{S}_\delta( u^2_x)\,\mathcal{S}_\delta( u^3_x)\right)\,\exp\left(\mathcal{S}_\delta (u^4)D_q\right)\right]\mathsf{G}_{2g}(q).
\end{align*}
The above power series is the Hadamard product
\begin{align*}
\left[\left(\mathcal{S}_\varepsilon (u^1_x)\,\mathcal{S}_\varepsilon( u^4_x) + \mathcal{S}_\varepsilon(u^2_x)\,\mathcal{S}_\varepsilon( u^3_x)\right)\,\exp\left(\mathcal{S}_\varepsilon\left(u^4\right)D_q\right)\right] \odot \mathcal{G}(\varepsilon,q).
\end{align*}
In the second line of the statement, we have made use of the following identity, valid for any differential polynomial $f$:
\begin{align*}
e^{f D_q} \mathcal{G}(\varepsilon,q) = \mathcal{G}(\varepsilon,e^f q).
\end{align*}
\end{proof}

\begin{remark} Notice that, since the power series whose coefficients are the Bernoulli numbers is divergent, so is the series $\mathcal{G}(\varepsilon,q)$ as a power series in $\varepsilon$. This means that, in the above formulae, $\mathcal{G}(\varepsilon,q)$ must be treated as a formal power series only. With the idea of finding an expression for $\overline{G}$ which avoids the use of divergent power series, let us recall some facts about the Weierstra\ss{} elliptic function and its relation with the Eisenstein series $G_k$, see for instance \cite{pixton_oberdieck_18}. In the ring $0<|q|<|p|<1$, define the shifted Weierstra\ss{} elliptic function as
$$\overline{\wp}_\tau(z):=\sum_{a\in \mathbb{Z}_{\not= 0}} \frac{a}{1-q^a} p^a ,\qquad \text{where}\ q=\exp(2\pi\cxi \tau),\quad p=\exp(2\pi\cxi z).$$
The shifted Weierstra\ss{} function $\overline{\wp}_\tau(z)$ expands at $z=0$ as 
	\begin{align*} \label{eq:wp-expansion}
		\frac 1{(2\pi\cxi z)^2} + 2 \sum_{\ell=0}^\infty G_{2\ell+2}(q) \frac{(2\pi\cxi z)^{2\ell}}{(2\ell)!}.		
	\end{align*}
The Weierstra\ss{} elliptic function $\wp_\tau(z)$ is then defefined by removing the constant term from $\overline{\wp}_{\tau}(z)$, i.e. $\overline{\wp}_\tau(z)\coloneqq {\wp}_\tau(z) + 2G_2(q)$. 

Next, consider the following elementary fact about residues and the Laplace transform:
\begin{align*}
\frac{1}{2\pi\cxi}  \oint_{|\zeta|=1} \int_{\delta\in\mathbb{R}_{\geq 0}} \delta^k\zeta^\ell e^{-\delta\zeta} d\delta\, d\zeta \coloneqq \delta_{k\ell} \ell! \, , \qquad k\in \mathbb{Z}_{\geq 0}, \ell \in \mathbb{Z}.
\end{align*}

We combine these two observations to rewrite the formula for $\overline{G}$ in the following equivalent way: 
\begin{align*}
\overline{G}& =  \int \left[ \frac{(u^1)^2 u^4}{2} + u^1 u^2 u^3 + \frac{\hbar}{4\pi} \oint\!\!\int
\left(\mathcal{S}_{\delta} ( u^1_x) \,\mathcal{S}_{\delta}( u^4_x) +\mathcal{S}_{\delta}( u^2_x)\,\mathcal{S}_{\delta}( u^3_x)\right) \overline{\wp}_{\tau+\frac{\mathcal{S}_{\delta} (u^4)}{2\pi\cxi}}\Big(\frac{\zeta}{2\pi\cxi}\Big) \frac{e^{-\delta\zeta /\varepsilon}}{\varepsilon}	d\delta d\zeta \right]dx,
\end{align*}
where in the choise of the countour one has to assume that either $|\zeta/\varepsilon|=1$ and $\delta \in \mathbb{R}_{\geq 0}$, or, alternatively, $|\zeta|=1$ and $\delta/\varepsilon \in \mathbb{R}_{\geq 0}$.

It is also interesting to compute the primary Hamiltonians of the DR hierarchy, $\overline{G}_{\alpha,0} = \frac{\partial \overline{G}}{\partial u^\alpha}$, $1 \leq \alpha\leq 4$, as
\begin{align*}
\overline{G}_{1,0} & = \int \left(u^1 u^4 + u^2 u^3\right) dx, \\
\overline{G}_{2,0} & = \int \left( u^1 u^3\right) dx,\\
\overline{G}_{3,0} & = \int \left(-u^1 u^2 \right) dx,\\
\overline{G}_{4,0} & = \int \left[ \frac{(u^1)^2}{2} - \frac{\cxi \hbar}{8\pi^2}
\oint\!\!\int \left(\mathcal{S}_{\delta} ( u^1_x) \,\mathcal{S}_{\delta}( u^4_x) + \mathcal{S}_{\delta}( u^2_x)\,\mathcal{S}_{\delta}( u^3_x)\right)
	\partial_\tau \overline{\wp}_{\tau+\frac{\mathcal{S}_{\delta} (u^4)}{2\pi\cxi}}\Big(\frac{\zeta}{2\pi\cxi}\Big) \frac{e^{-\delta\zeta / \varepsilon}}{\varepsilon}
	d\delta\, d\zeta \right] dx.
\end{align*}
The last Hamiltonian can be written as well as
\begin{align*}
\overline{G}_{4,0} &= \int \left[ \frac{(u^1)^2}{2} - \frac{\cxi \hbar}{8\pi^2} 
\oint\!\!\int
\left(\Delta_{\delta} ( u^1) \,\Delta_{\delta}( u^4) + \Delta_{\delta}( u^2)\,\Delta_{\delta}( u^3)\right)
	\partial_\tau \overline{\wp}_{\tau+\frac{\Delta_{\delta} (\partial_x^{-1}u^4)}{2\pi\cxi}}\Big(\frac{\zeta}{2\pi\cxi}\Big) \frac{e^{-\delta\zeta / \varepsilon}}{\varepsilon}
	d\delta\, d\zeta \right] dx,
\end{align*}
where $\Delta_y (u^\alpha)(x)= \frac{2}{y}\left(u^\alpha(x+y/2) - u^\alpha(x-y/2)\right)$, $\tau=\frac{1}{2\pi \cxi} \log q$.\\
\end{remark}

\begin{remark}
There are several interesting limits for the potential computed in Proposition \ref{prop:main}. First, the \emph{dispersionless limit} $\varepsilon \to 0$,
\begin{align*}
\left.\overline{G}\right|_{\varepsilon=0} = \int \left[ \frac{(u^1)^2 u^4}{2} + u^1 u^2 u^3 + \cxi \hbar\,  (u^1_x u^4_x +  u^2_x u^3_x)\, \mathsf{G}_{2} \left( q e^{u_4}\right)\right]dx.\\
\end{align*}
Second, the \emph{trigonometric} $q \to 0$ limit, or $\tau \to +\cxi \infty$. Since $\mathsf{G}_{2g}(q) \to -\frac{B_{2g}}{4g}$ as $q\to 0$ we have
$$\lim_{\tau \to +\cxi \infty} \overline{\wp}_{\tau}\left(\frac{\varepsilon \zeta}{2 \pi \cxi}\right) =-\frac{1} {4 \sin^2\left(\frac{\varepsilon \zeta}{2\cxi}\right)}$$
and we obtain
\begin{align*}
\left.\overline{G}\right|_{q=0} = \int \left[ \frac{(u^1)^2 u^4}{2} + u^1 u^2 u^3 + \frac{\hbar}{4\pi} 
\oint\!\!\int
\frac{\mathcal{S}_{\delta} ( u^1_x) \,\mathcal{S}_{\delta}( u^4_x) +\mathcal{S}_{\delta}( u^2_x)\,\mathcal{S}_{\delta}( u^3_x)}{4 \sin^2\left(\frac{\varepsilon \zeta}{2\cxi}\right)} e^{-\delta\zeta}	d\delta d\zeta \right]dx.\\
\end{align*}
Third, the \emph{double scaling classical limit} obtined by replacing
\begin{equation}\label{eq:dsl}
\begin{array}{lll}
u^1_k \mapsto a^{-1} u^1_k, \qquad &  u^2_k \mapsto a^{-1} u^2_k, \qquad& k\geq 0,\\
u^3_k \mapsto a u^3_k, \qquad &  u^4_k \mapsto a u^4_k, \qquad &k\geq 0,\\
\tau \mapsto a \tau, \qquad & \varepsilon \mapsto a \varepsilon, \qquad & \hbar \mapsto a \mu^2
\end{array}
\end{equation}
in the rescaled potential $a \overline{G}$, with $\mu$ a new formal variable, and then taking the $a \to 0^+$ limit, along the real axis, with $\tau$ fixed in the upper half plane. Because $\mathsf{G}_{2g}(e^{2\pi \cxi a \tau}) \sim -\frac{B_{2g}}{4g}(a\tau)^{-2g}$ as $a\to 0^+$, we have
$$\lim_{a \to 0^+} a^2 \overline{\wp}_{a \tau}\left(\frac{a \varepsilon \zeta}{2 \pi \cxi}\right) =-\frac{1} {4 \tau^2 \sin^2\left(\frac{\varepsilon \zeta}{2\cxi \tau}\right)}$$
and we obtain a new \emph{classical} DR potential
\begin{align*}
\overline{h} =  \int \left[ \frac{(u^1)^2 u^4}{2} + u^1 u^2 u^3 + \frac{\mu^2}{4\pi} 
\oint\!\!\int
\frac{\mathcal{S}_{\delta} ( u^1_x) \,\mathcal{S}_{\delta}( u^4_x) +\mathcal{S}_{\delta}( u^2_x)\,\mathcal{S}_{\delta}( u^3_x)}{4\left(\tau+\frac{\mathcal{S}_{\delta} (u^4) }{2 \pi \cxi}\right)^2 \sin^2\left(\frac{\varepsilon \zeta}{2\cxi\left(\tau+\frac{\mathcal{S}_{\delta} (u^4) }{2 \pi \cxi}\right)}\right)} e^{-\delta\zeta}	d\delta d\zeta \right]dx.
\end{align*}
In particular, the Hamiltonian densities $h_{\alpha,d}$, $1\leq\alpha\leq 4$, $d\geq -1$, obtained, in the $a \to 0$ limit, by the replacements \eqref{eq:dsl} in the rescaled DR Hamiltonian densities $a^d G_{1,d}$, $a^d G_{2,d}$, $a^{d+2}G_{3,d}$, $a^{d+2}G_{4,d}$, respectively, satisfy the DR recursion
$$\partial_x (D-1) h_{\alpha,d+1} = \{h_{\alpha,d},h_{1,1}\}, \qquad 1\leq\alpha\leq 4, d\geq -1,$$
where $D=\mu\frac{\partial}{\partial \mu} + \varepsilon \frac{\partial}{\partial \varepsilon}  + \sum_{s=0}^\infty u^{\alpha}_s \frac{\partial}{\partial u^{\alpha}_s}$, $h_{1,-1}=u^4$, $h_{2,-1}=u^3$, $h_{3,-1}=-u^2$, $h_{4,-1}=u^1$, $\{\cdot,\cdot\} = \left.\left(\frac{1}{\hbar}[\cdot,\cdot]\right)\right|_{\hbar=0}$ and $\overline{h}_{1,1} = (D-2) \overline{h}$.\\
Further sending $\varepsilon \to 0$ (i.e. taking the double scaling limit of the quantum dispersionless limit above) we obtain
\begin{align*}
\left.\overline{h}\right|_{\varepsilon=0} = \int \left[ \frac{(u^1)^2 u^4}{2} + u^1 u^2 u^3  +\frac{\cxi \pi^2 \mu^2}{6}\,  \frac{(u^1_x u^4_x +  u^2_x u^3_x)}{\left(2 \pi \cxi\tau+u^4\right)^2}\, \right]dx.\\
\end{align*}
which produces, in particular, the Hamiltonians
$\overline{h}_{4,0} = \int\left[\frac{(u^1)^2}{2} -\frac{\cxi \pi^2 \mu^2}{3}\,  \frac{(u^1_x u^4_x +  u^2_x u^3_x)}{\left(2 \pi \cxi\tau+u^4\right)^3}\, \right]dx$ and $\overline{h}_{1,1}=\int \left[ \frac{(u^1)^2 u^4}{2} + u^1 u^2 u^3  -\frac{2 \pi^3 \mu^2 \tau}{3}\,  \frac{(u^1_x u^4_x +  u^2_x u^3_x)}{\left(2 \pi \cxi\tau+u^4\right)^3}\, \right]dx$.
\end{remark}

\printbibliography

@article {DSVV,
	AUTHOR = {Dotsenko, Vladimir and Shadrin, Sergey and Vaintrob, Arkady
	and Vallette, Bruno},
	TITLE = {Deformation theory of cohomological field theories},
	JOURNAL = {J. Reine Angew. Math.},
	FJOURNAL = {Journal f\"{u}r die Reine und Angewandte Mathematik. [Crelle's
	Journal]},
	VOLUME = {809},
	YEAR = {2024},
	PAGES = {91--157},
	ISSN = {0075-4102},
	MRCLASS = {14H81 (14H10 14N35 18G85 18M85 55P48 81T70)},
	MRNUMBER = {4726567},
	MRREVIEWER = {Nathan Grieve},
	DOI = {10.1515/crelle-2023-0098},
	URL = {https://doi.org/10.1515/crelle-2023-0098},
}

@article {janda-targetcurves,
	AUTHOR = {Janda, Felix},
	TITLE = {Gromov-{W}itten theory of target curves and the tautological
	ring},
	JOURNAL = {Michigan Math. J.},
	FJOURNAL = {Michigan Mathematical Journal},
	VOLUME = {66},
	YEAR = {2017},
	NUMBER = {4},
	PAGES = {683--698},
	ISSN = {0026-2285,1945-2365},
	MRCLASS = {14N35 (14C17 14H10)},
	MRNUMBER = {3720320},
	MRREVIEWER = {Andrew\ Kresch},
	DOI = {10.1307/mmj/1508810814},
	URL = {https://doi.org/10.1307/mmj/1508810814},
	
	eprint={1308.6182},
	archivePrefix={arXiv},
}

@article{kontsevich_manin_1994,
   title={Gromov-Witten classes, quantum cohomology, and enumerative geometry},
   volume={164},
   ISSN={},
   url={},
   DOI={},
   number={3},
   journal={Communications in Mathematical Physics},
   publisher={Springer Science and Business Media LLC},
   author={Kontsevich, M. and Manin, Y.},
   year={1994},
   month={Aug},
   pages={525–562},
   addendum = {arXiv: \href{https://arxiv.org/pdf/hep-th/9402147.pdf}{9402147}},
   }

@article{dubrovin_zhang_01,
   title={Normal forms of hierarchies of integrable PDEs, Frobenius manifolds and Gromov-Witten invariants},
   volume={},
   ISSN={},
   url={},
   DOI={},
   number={},
   journal={Advances in Mathematics},
   publisher={},
   author={Dubrovin, B. and Zhang, Y.},
   year={2001},
   month={},
   pages={189},
   addendum = {arXiv: \href{https://arxiv.org/pdf/math/0108160.pdf}{0108160}},
	}

@article{buryak_15,
   title={Double ramification cycles and integrable hierarchies},
   volume={},
   ISSN={},
   url={},
   DOI={},
   number={},
   journal={Communications in Mathematical Physics},
   publisher={},
   author={Buryak, A.},
   year={2015},
   month={},
   pages={1085–1107},
   addendum = {arXiv: \href{https://arxiv.org/pdf/1403.1719.pdf}{1403.1719}},
	}

@article{buryak_dubrovin_guéré_rossi_18,
   title={Tau-structure for the double ramification hierarchies},
   volume={},
   ISSN={},
   url={},
   DOI={},
   number={},
   journal={Communications in Mathematical Physics},
   publisher={},
   author={Buryak, A. and Dubrovin, B. and Guéré, J. and Rossi, P.},
   year={2018},
   month={},
   pages={191–260},
   addendum = {arXiv: \href{https://arxiv.org/pdf/1602.05423.pdf}{1602.05423}},
	}

@article{buryak_dubrovin_guéré_rossi_20,
   title={Integrable systems of double ramification type},
   volume={},
   ISSN={},
   url={},
   DOI={},
   number={24},
   journal={International Mathematics Research Notices 2020},
   publisher={},
   author={Buryak, A. and Dubrovin, B. and Guéré, J. and Rossi, P.},
   year={2020},
   month={},
   pages={10381-10446},
   addendum = {arXiv: \href{https://arxiv.org/pdf/1609.04059v2.pdf}{1609.04059}},
	}

@misc{pixton_oberdieck_23,
   title={Quantum cohomology of the Hilbert scheme of points on an elliptic surface},
   volume={},
   ISSN={},
   url={},
   DOI={},
   number={},
   journal={},
   publisher={},
   author={Oberdieck, G. and Pixton, A.},
   year={2023},
   month={},
   pages={},
   addendum = {arXiv: \href{https://arxiv.org/abs/2312.13188}{2312.13188}},
	}

@article{buryak_rossi_15,
   title={Double ramification cycles and quantum integrable systems},
   volume={106},
   ISSN={},
   url={},
   DOI={},
   number={3},
   journal={Letters in Mathematical Physics},
   publisher={},
   author={Buryak, A. and Rossi, P.},
   year={2016},
   month={},
   pages={289-317},
   addendum = {arXiv: \href{https://arxiv.org/abs/1503.03687}{1503.03687}},
	}

@article{buryak_posthuma_shadrin_12,
   title={A polynomial bracket for the Dubrovin-Zhang hierarchies},
   volume={92},
   ISSN={},
   url={},
   DOI={},
   number={1},
   journal={Journal of Differential Geometry},
   publisher={},
   author={Buryak, A. and Posthuma, H. and Shadrin, S.},
   year={2012},
   month={},
   pages={153-185},
   addendum = {arXiv: \href{https://arxiv.org/abs/1009.5351}{1009.5351}},
	}

@article{pixton_oberdieck_18,
   title={Holomorphic anomaly equations and the Igusa cusp form conjecture},
   volume={213},
   ISSN={},
   url={},
   DOI={},
   number={},
   journal={Inventiones Mathematicae},
   publisher={},
   author={Oberdieck, G. and Pixton, A.},
   year={2018},
   month={},
   pages={507-587},
   addendum = {arXiv: \href{https://arxiv.org/abs/1706.10100}{1706.10100}},
	}

@article{okounkov_pandharipande_06_a,
   title={Virasoro constraints for target curves},
   volume={163},
   ISSN={},
   url={},
   DOI={},
   number={1},
   journal={Inventiones Mathematicae},
   publisher={},
   author={Okounkov, A. and Pandharipande, R.},
   year={2006},
   month={},
   pages={47-108},
   addendum = {arXiv: \href{https://arxiv.org/abs/math/0308097}{0308097}},
	}

@article{okounkov_pandharipande_06_b,
   title={The equivariant Gromov-Witten theory of $\mathbb{P}^1$},
   volume={163},
   ISSN={},
   url={},
   DOI={},
   number={2},
   journal={Annals of Mathematics},
   publisher={},
   author={Okounkov, A. and Pandharipande, R.},
   year={2006},
   month={},
   pages={561–605},
   addendum = {arXiv: \href{https://arxiv.org/abs/math/0207233}{0207233}},
	}

@article{okounkov_pandharipande_06_c,
   title={Gromov-Witten theory, Hurwitz theory, and completed cycles},
   volume={163},
   ISSN={},
   url={},
   DOI={},
   number={2},
   journal={Annals of Mathematics},
   publisher={},
   author={Okounkov, A. and Pandharipande, R.},
   year={2006},
   month={},
   pages={517–560},
   addendum = {arXiv: \href{https://www.arxiv.org/abs/math/0204305}{0204305}},
	}

@article {buryak_22,
	AUTHOR = {Buryak, Alexandr},
	TITLE = {A formula for the {G}romov-{W}itten potential of an elliptic
	curve},
	JOURNAL = {Mosc. Math. J.},
	FJOURNAL = {Moscow Mathematical Journal},
	VOLUME = {23},
	YEAR = {2023},
	NUMBER = {3},
	PAGES = {309--317},
	ISSN = {1609-3321,1609-4514},
	MRCLASS = {14N35 (14H52)},
	MRNUMBER = {4640382},
	DOI = {10.17323/1609-4514-2023-23-3-309-317},
    eprint = {2205.12777},
	archivePrefix = {arXiv},
}

\end{document}